\newcommand{\Oc}{\mathcal{O}}
\newcommand{\N}{\mathbb N}
\newcommand{\Z}{\mathbb Z}
\newcommand{\Q}{\mathbb Q}
\newcommand{\p}{\mathfrak p}
\newcommand{\Ac}{\mathcal{A}}
\DeclareMathOperator{\Hom}{Hom}
\DeclareMathOperator{\Nm}{N}
\DeclareMathOperator{\Tr}{tr}
\newcommand{\St}{\operatorname{\textrm{St}}}
\newcommand{\BC}{\operatorname{\textrm{BC}}}
\newtheorem{thm}{Theorem}
\newtheorem{prop}{Proposition}
\newtheorem*{result}{Result}
\theoremstyle{remark}
\begin{document}

\title[Genuine Bianchi modular forms of higher level]{Genuine Bianchi modular forms of higher level, \\at varying weight and discriminant}

\author{Alexander D. Rahm}
\address{Facult\'e des Sciences, de la Technologie et de la Communication, Universit\'e du Luxembourg}
\email{alexander.rahm@uni.lu}
\urladdr{http://math.uni.lu/~rahm/}

\author{Panagiotis Tsaknias}
\address{Facult\'e des Sciences, de la Technologie et de la Communication, Universit\'e du Luxembourg}
\email{panagiotis.tsaknias@gmail.com}
\urladdr{http://math.uni.lu/~tsaknias/}

\begin{abstract}
 Bianchi modular forms are automorphic forms over an imaginary quadratic field, 
 associated to a Bianchi group. 
 Even though modern studies of Bianchi modular forms go back to the mid 1960's, 
 most of the fundamental problems surrounding their theory are still wide open. 
 Only for certain types of Bianchi modular forms, which we will call non-genuine, 
 it is possible at present to develop dimension formulas: 
 They are (twists of) those forms which arise from elliptic cuspidal modular forms via the Langlands Base-Change procedure, 
 or arise from a quadratic extension of the imaginary quadratic field via automorphic induction (so-called CM-forms). 
 The remaining Bianchi modular forms are what we call genuine,
 and they are of interest for an extension of the modularity theorem (formerly the Taniyama-Shimura conjecture, crucial in the proof of Femat's Last Theorem) 
 to imaginary quadratic fields. 
 In a preceding paper by Rahm and \c{S}eng\"un, 
 an extreme paucity of genuine cuspidal Bianchi modular forms has been reported, 
 but those and other computations were restricted to level One. 
 In this paper, we are extending the formulas for the non-genuine Bianchi modular forms to deeper levels, and we are able to spot the first, rare instances of genuine forms at deeper level and heavier weight.
\end{abstract}

\maketitle

\section{Introduction}

Spaces of genuine Bianchi modular forms have been used Berger, Demb\'el\'e, Pacetti and \c{S}eng\"un
to construct evidence for the Brumer--Kramer paramodularity conjecture and the Eichler--Shimura conjecture~\cite{BergerDembelePacettiSengun}.
This evidence however consists so far in just one Abelian surface 
(that is paramodular in the Brumer--Kramer setting; 
and they deduce from it an Abelian surface satisfying the Eichler--Shimura conjecture).
No other non-trivial example for the Eichler--Shimura conjecture in dimension $2$ 
is present in the literature.
This is due to the limitations of the database of genuine weight $2$ Bianchi modular forms that was available to the four above-named authors.
That database~\cite{RahmSengun} did treat only level One Bianchi modular forms,
yielding extremely few genuine forms. 
So in order to systematically build up more evidence for the  Brumer--Kramer paramodularity conjecture and the Eichler--Shimura conjecture in dimension $2$,
it will not be enough to simply continue on a larger database of level One genuine Bianchi modular forms,
but rather one should use a database of higher level genuine Bianchi modular forms.
The latter type of database is provided with the present paper~\cite{database}.
Again, as for level One, the higher weight spaces of genuine forms are very rare,
but the weight $2$ spaces in our database are more abundant, and give hope for the construction of an algorithm that could systematically produce the desired Abelian surfaces.

In order to find the spaces of genuine Bianchi modular forms in the present database,
a major task was to establish dimension formulas for the non-genuine forms:
For level One, formulas were already established by Finis and Grunewald~\cite{FGT},
yielding the dimensions of Langlands Base-Change forms and Complex Multiplication (CM) forms outside the Base-Change space.
These dimensions of the non-genuine subspace did, in~\cite{RahmSengun},
only need to be subtracted from the full dimension of the Bianchi modular forms space,
which was computed on the machine from the geometry of the Bianchi modular group.
The latter machine computations extend to higher level via the Eckmann--Shapiro lemma,
which is implemented for calculating the cohomology that corresponds to the Bianchi modular forms space through the Eichler--Shimura(--Harder) isomorphism.
The Base-Change dimension formulas however did require a greater effort for their extension to higher level.
What added a particular additional challenge here, is that at higher levels, also twists of Base-Change appear, and have to be taken care of as part of the non-genuine space.

\begin{result}
Let $K$ be an imaginary quadratic field of discriminant $D_K$, odd class number $h_K$
and ring of integers $\mathcal{O}_K$.
Let $N\geq1$ be a square-free integer, coprime to $D_K$. 
Then we provide an explicit formula for the dimension of the space $S_k^{\textrm{nG}}(N\Oc_K)$
of non-genuine modular newforms of level $N\Oc_K$ and arbitrary weight $k$ in Theorem~\ref{thm:trivial}.
This formula comes with instructions (Section~\ref{Dimension formulas}) on how to evaluate it on a computer.
\end{result}

We also provide such formulas for class number $1$, 
$K=\Q(\sqrt{-p})$ for some prime $p\equiv 3\mod 4$ and level 
$\mathfrak{p}$ or $\mathfrak{p}^2$ where $\mathfrak{p}^2=(p)$
(see Section~\ref{level a power of the discriminant}).
But our dimension formulas need to be explicit enough to be evaluable on the machine, 
so they involve a lot of case distinctions,
and consequently are provided only for a part of the levels.
The authors hope that it should however become clear for experts how to obtain formulas for the missing levels.
For instance, for some theorems we assume that the class number is odd, 
which is equivalent to the discriminant being $-4$ or $-8$
or $-p$, where $p$ is a prime congruent to $3$ modulo $4$.
However, for even discriminants and class numbers, one should proceed analogously (see the remarks in Section~\ref{Final Remarks}).

In conclusion, we can say that the formulas and databases established with the present paper constitute a significant progress on the question raised in section 9.1 of~\cite{BergeronSengunVenkatesh}
on the exhaustion of spaces of newforms by non-genuine forms. 

\subsection*{Organization of the paper}
In Section \ref{sec:setting}, we fix the notation and assumptions that we will use throughout the paper.
We also derive a first quantitative expression for the dimension of the Base-Change space.
In Section~\ref{twists}, we take the twists of Base-Change into account.
In Section~\ref{sec:CM}, we establish formulas for the dimension of the space of CM-forms that are not Base-Change.
In Section~\ref{sec:total}, we provide a formula for the total dimension of the space generated by newforms that are non-genuine,
i.e. of any of the three types mentioned above.
This formula is made explicit in Section~\ref{Dimension formulas}, in a way that it can be evaluated on the machine.
We make some concluding remarks on our formulas in Section~\ref{Final Remarks}.
In Section~\ref{sec:computational}, we present our machine results, in which the 
non-genuine dimension is subtracted from the full dimension of the newforms space,
the latter being computed from the geometry of the Bianchi modular group.
Due to the sparsity of the genuine forms, those results can be considered as a treasure map to the conditions under which they exist.

\subsection*{Acknowledgements} 
This research project was funded by Gabor Wiese's FNR Luxembourg grant INTER/DFG/FNR/12/10/COMFGREP. 
We would like to thank John Cremona, Lassina Demb\'el\'e, Aurel Page and Gabor Wiese for helpful discussions.
Our very special thanks go to {M. Haluk \c{S}eng\"un}, 
for having initiated this research project, 
contributed helpful advice and 
having designed the MAGMA source code for computing the relevant cohomology of congruence subgroups via a version of the Eckmann-Shapiro lemma.

\newpage

\section{Setting}\label{sec:setting}

Let $D>1$ be a square-free integer and let $K=\Q(\sqrt{-D})$ and denote its ring of integers by $\mathcal{O}_K$. For any ideal $\mathfrak{n}$ of $\Oc_K$ and integer $k\geq2$ we let $S_{k,k}^K(\Gamma_0(\mathfrak{n}))$ denote the space of Cuspidal Bianchi modular forms over $K$ of weight $k,k$, level $\mathfrak{n}$ and trivial Nebentypus. This space admits an action of a Hecke algebra generated by operators indexed by the prime ideals of $\Oc_K$. 
Each eigenvector $F$ of this algebra corresponds to an automorphic representation 
$\Pi_F$ in $\mathcal{A}_2(K)$, the set of all cuspidal automorphic representations of GL$(2,\mathbb{A}_K)$.
We are interested in the ones appearing in the image of the Base-Change operator
defined by Langlands \cite{Langlands80},
$$\BC_\Q^K:\Ac_2(\Q)\to \Ac_2(K),$$
along with their twists. Here we generalise the formula of \cite{FGT} to arbitrary level $\mathfrak{n}$ coprime to the discriminant $D_K$ of $K$. More precisely, we provide an algorithm computing the dimension of this space without really computing the spaces in hand. In some simple cases we also provide explicit formulas. We also provide an algorithm for level $(p)=\mathfrak{p}^2$, where $p$ is a rational prime that ramifies in $K$. We hope that our proposed strategy outlines a complete plan for the interested reader to fill out the missing level cases without much effort.

It is clear that without loss of generality it is enough to study the Base-Change image problem on the new subspace of $S_{k,k}^K(\Gamma_0(\mathfrak{n}))$, which we will denote by $S_k(\mathfrak{n})$ in the rest of this paper.
A necessary condition for Base-Change to have non-trivial image is that the level is Galois stable, i.e. an ideal generated by a rational number. Let $\mathfrak{n}=N\Oc_K$ for some $N\geq 1$ coprime to~$D_K$. 
The first crucial observation is that one can consider the Base-Change operator locally. Let $\Pi=\BC_\Q^K(\pi)$ be an automorphic representation in the Base-Change image. 
Then for any prime ideal $\mathfrak{p}$ coprime to~$D_K$,
the conductor of $\Pi_\mathfrak{p}$ is the extension to $\Oc_K$ of the conductor of $\pi_p$, 
where $p$ is the prime below $\mathfrak{p}$. 
We therefore get that if $\pi$ contributes to the Base-Change subspace of 
$S_k(\mathfrak{n})$, then its conductor away from $D_K$ is exactly $N$.

This leaves us with the primes dividing the discriminant. Let $\mathfrak{p}$ divide $D_K$. Then $\Pi_\mathfrak{p}$ is a principal unramified series, since we required that the level is coprime to the discriminant. 
Let $\omega_p$ be the quadratic character of conductor $p$.
A complete list of all the possible local components $\pi_p$ of $\pi$ at $p$ is provided in \cite{FGT} which we also provide here for convenience:
\begin{itemize}
\item unramified Principal Series.
\item $1\oplus\omega_p$. 
\item A certain supercuspidal representation ( of conductor $p^2$ when $p>2$).
\end{itemize}

We now have a complete description of the inverse image of the Base-Change operator in terms of its inertial type at the primes $p|ND_K$: If $p|N$, then the type can be anything of conductor equal to the power of $p$ dividing $N$.
And if $p|D_K$, then it is one of the types mentioned above. 
In order to compute the dimension of the Base-Change subspace we would have to keep in mind that the Base-Change map is one-to-one when the component at a ramified prime is of the first kind and two-to-one if it is of the other two. 
Moreover, if a newform in the preimage spaces has CM by $K$, then its image is Eisenstein, so it does not contribute to the dimension. 
The above discussion yields the following formula.
\begin{prop} \label{prop:formula}
Let $S(d)$ be the set of prime divisors of $d$.
Let $\omega_d$ be the quadratic character of conductor $d$, under the convention that $\omega_d$ is trivial at a prime $p|d$ if and only if $p^2||d$. 
\\Then we obtain
$$\dim S_k^{\textrm{BC}}(N\Oc_K) =
\sum_{d|D^2_K} \frac{1}{2^{|S(d)|}}\bigg( \dim S_k^{d\textrm{-sc,new}}(\Gamma_0(Nd), \omega_d) - \dim S_k^{d\textrm{-sc,new}}(\Gamma_0(Nd), \omega_d)^{\textrm{CM}_K}\bigg),$$
where for each $d$, the space $S_k^{d-\textrm{sc,new}}(\Gamma_0(Nd), \omega_d)$ is the subspace of $S_k^{\textrm{new}}(\Gamma_0(Nd), \omega_d)$ 
spanned by newforms whose local type at every $p^2||d$ is the supercuspidal representation mentioned above.
And $S_k^{d\textrm{-sc,new}}(\Gamma_0(Nd), \omega_d)^{\textrm{CM}_K}$ is the $CM$ subspace.
\end{prop}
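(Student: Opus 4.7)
The plan is to enumerate, on the elliptic side, the non-CM newforms $\pi$ whose Base-Change lands in $S^{\textrm{BC}}_k(N\Oc_K)$, stratify them by the inertial type of $\pi_p$ at each ramified prime $p|D_K$, and then divide by the size of the BC fiber. Because BC is a local construction, the whole analysis will factor prime by prime and reduce to combinatorics of the local types recalled above.

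First, at each rational prime $p\nmid D_K$ the local component $\pi_p$ is determined by $\Pi_\mathfrak{p}$ and has conductor $p^{v_p(N)}$, as already established. At each $p|D_K$, $\pi_p$ is one of the three types (0)~unramified principal series, (1)~$1\boxplus\omega_p$, or (2)~the prescribed supercuspidal, with conductors $1,\,p,\,p^2$ and central characters $1,\,\omega_p,\,1$ respectively. Recording the type at each $p|D_K$ by its conductor exponent $a_p\in\{0,1,2\}$ packages the data into a divisor $d=\prod p^{a_p}$ of $D_K^2$: $\pi$ then has level $Nd$ and Nebentypus $\omega_d$, whose local factor at $p|D_K$ is $\omega_p$ precisely when $p||d$ (matching type~(1)) and trivial when $p\nmid d$ or $p^2||d$ (matching types~(0) and~(2)). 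The admissible newforms are exactly those in $S^{d\textrm{-sc,new}}_k(\Gamma_0(Nd),\omega_d)$; the $d$-sc condition at primes with $p^2||d$ isolates the specified supercuspidal from the ramified principal series of the same conductor, the latter being the $\omega_p$-twist of a type~(0) form and counted under a different summand. Newforms with CM by $K$ must be subtracted, since $\pi\cong\pi\otimes\omega_{D_K}$ forces $\BC(\pi)$ to be Eisenstein, hence outside the cuspidal dimension.

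To pass from the count of admissible preimages $\pi$ to $\dim S^{\textrm{BC}}_k(N\Oc_K)$, one must divide by the order of the BC fiber. At each $p|d$ the chosen local type satisfies $\pi_p\cong\pi_p\otimes\omega_p$ (both types~(1) and~(2) are self-equivalent under this twist), and this local self-twist forces the local BC to be two-to-one. Multiplying over the $|S(d)|$ primes $p|d$ yields the global over-count $2^{|S(d)|}$, whence the reciprocal coefficient in the formula. Summing the contributions over all $d|D_K^2$ then assembles the claimed identity.

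The hard part will be the final multiplicity analysis. One must check that the $d$-sc condition really selects a unique representative per local self-twist orbit of size $2^{|S(d)|}$, that the companion ramified principal series of conductor $p^2$ (excluded by $d$-sc) is genuinely enumerated elsewhere as a twist of a type~(0) form, and that the global $\omega_{D_K}$-twist is consistent with the product of the local two-to-one multiplicities, so that no orbit is lost or double-counted across different values of $d$. It is this combinatorial coherence that makes the clean factor $1/2^{|S(d)|}$ emerge.
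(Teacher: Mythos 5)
Your proposal follows essentially the same route as the paper: there the proposition is presented as a consequence of the preceding discussion, which likewise treats Base-Change locally, lists the three admissible types at primes dividing $D_K$, stratifies the elliptic newforms by the resulting divisor $d\mid D_K^2$ with level $Nd$ and nebentypus $\omega_d$, divides by the one-to-one/two-to-one fiber multiplicity encoded in $1/2^{|S(d)|}$, and subtracts the CM forms whose Base-Change is Eisenstein. The multiplicity coherence you flag as ``the hard part'' (reconciling the product of local two-to-one factors with the global $\omega_{D_K}$-twist across strata) is precisely the step the paper also leaves implicit, so your argument neither diverges from nor falls short of the published one.
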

Of course many of the spaces appearing on the right hand side are trivial since the parity of their nebentypus does not match that of the weight $k$. 
The final key observation is that we have formulas for all of the spaces appearing on the right hand side of the above formula (see \cite{CO} or \cite{FGT}).

\section{Twists of Base-Change}\label{twists}

It is fairly easy to see that it is possible to obtain non Base-Change forms just by twisting a Base-Change one by a suitable non Base-Change character. We would like to count these forms out of our genuine forms subspace and therefore we would like to find the dimension of the subspace they span as well. 
Let us denote by $S_k^{\textrm{tBC}}(\mathfrak{n})$ the subspace of $S_k(\mathfrak{n})$ generated by those newforms that are twists of Base-Change. 
For this purpose, we say that $f \in S_k(\mathfrak{n})$ is \textit{twist of Base-Change},
if there exists a classical weight $k$ modular form $g$ of level $M \in \N$ with $M$ dividing $\mathfrak{n}$,
such that $f = \BC_\Q^K(g) \otimes c$ for some non-trivial character $c$.
We first prove the following:

%
\begin{thm}\label{thm:twist-2}
Let $K$ be an imaginary quadratic field of discriminant $D_K$ and class number $h_K$.
Let $N\geq1$ be a square-free integer, coprime to $D_K$. Let $\mathfrak{n} = N \Oc_K$. 
Then for all $k\geq 2$, $$\dim S_k^{\textrm{tBC}}(\mathfrak{n})=(h_K -1) \dim S_k^{\textrm{BC}}(\mathfrak{n}).$$

\end{thm}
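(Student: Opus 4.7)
My approach is to identify $S_k^{\textrm{tBC}}(\mathfrak{n})$ with the disjoint union of $h_K - 1$ copies of $S_k^{\textrm{BC}}(\mathfrak{n})$, parametrised by the non-trivial characters of the ideal class group $\mathrm{Cl}(K)$. By class field theory, the finite-order unramified Hecke characters of $K$ are exactly the $h_K$ characters of $\mathrm{Cl}(K)$, and these are the twist-characters that will matter.

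First I would verify that, for each $\Pi \in S_k^{\textrm{BC}}(\mathfrak{n})$ and each $c \in \widehat{\mathrm{Cl}(K)}$, the twist $\Pi \otimes c$ lies in $S_k(\mathfrak{n})$. The level is preserved because $c$ is everywhere unramified, and the Nebentypus on $(\mathcal{O}_K/\mathfrak{n})^\times$ remains trivial because the central character is multiplied by the unramified character $c^2$. Since CM-by-$K$ forms were explicitly removed from $S^{\textrm{BC}}$ in Proposition~\ref{prop:formula}, no nontrivial self-twist relation $\Pi \otimes c \cong \Pi$ can hold, so the $h_K$ twists of a fixed $\Pi$ are pairwise non-isomorphic.

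Next I determine which of these twists are themselves Base-Change. By Langlands' theorem on cyclic base change, $\Pi \otimes c$ lies in the image of $\BC_\Q^K$ iff it is Galois-stable, $(\Pi \otimes c)^\sigma \cong \Pi \otimes c$; since $\Pi \cong \Pi^\sigma$, this reduces to $c^\sigma = c$. For $K$ imaginary quadratic, complex conjugation acts on $\mathrm{Cl}(K)$ by inversion, so $c^\sigma = c^{-1}$ and the condition becomes $c^2 = 1$, which forces $c = 1$ under the paper's standing odd-class-number assumption. Hence each of the $h_K - 1$ non-trivial class group characters contributes a distinct newform to $S_k^{\textrm{tBC}}(\mathfrak{n}) \setminus S_k^{\textrm{BC}}(\mathfrak{n})$, giving $\dim S_k^{\textrm{tBC}}(\mathfrak{n}) \geq (h_K - 1) \dim S_k^{\textrm{BC}}(\mathfrak{n})$.

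For the reverse inequality, I would write any $f \in S_k^{\textrm{tBC}}(\mathfrak{n})$ as $\BC(g) \otimes c'$ per the definition, and show that the hypothesis of level $N\mathcal{O}_K$ coprime to $D_K$, combined with the standard conductor-of-a-twist formulas at each finite prime, forces $c'$ to be unramified everywhere and $\BC(g)$ to already have level $N\mathcal{O}_K$. Then $c'$ is a class group character and $\BC(g) \in S_k^{\textrm{BC}}(\mathfrak{n})$, placing $f$ in the image of the construction above. The main obstacle will be exactly this surjectivity step: ruling out ramified twists $c'$ whose conductor at some prime could in principle be absorbed by a corresponding change in the local type of $\BC(g)$. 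This analysis is most delicate at primes dividing $D_K$, where supercuspidal local types are permitted (Proposition~\ref{prop:formula}), and relies on the conductor bookkeeping of Section~\ref{sec:setting}.
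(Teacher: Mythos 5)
Your proposal is correct and follows essentially the same route as the paper: ramified twisting characters are excluded because twisting a Steinberg or unramified principal-series local component by a character of conductor $\mathfrak{p}^m$ yields conductor $\mathfrak{p}^{2m}$, incompatible with the square-free level, leaving exactly the $h_K-1$ non-trivial unramified (class group) characters. The surjectivity step you flag as the main obstacle is in fact immediate from the paper's definition of twist of Base-Change, which requires $\BC_\Q^K(g)$ to have level $M\Oc_K$ with $M\mid N$, hence square-free and prime to $D_K$, so no supercuspidal local types or level-lowering ramified twists can arise; your added details on the distinctness of the twists and on $c^\sigma=c^{-1}$ make explicit what the paper's proof leaves implicit.
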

\begin{proof}
Since we are interested in newforms of square-free level, we get that the local type at any prime $\p$ is Steinberg or unramified principal series, depending on whether $\p|\mathfrak{n}$ or not. Let $\epsilon$ be a non Base-Change character of $\Oc_K^*$ and assume it has non-trivial conductor $c(\epsilon)$. Let $\p|c(\epsilon)$ be a prime ideal of $K$. Then $f\otimes\epsilon$ has level divisible by $\p^2$ and its type at $\p$ is $\epsilon\otimes \St$ or $\epsilon\oplus \epsilon$. In both cases, the type comes from Base-Change if and only if $\epsilon$ does. This shows that such an $\epsilon$ cannot be used to twist a non Base-Change form to a Base-Change one and vice versa. The only option left for $\epsilon$ is to be non Base-Change and to have trivial conductor. There precisely $h_k-1$ many such characters. Indeed each such character has the desired property which gives the desired result.
\end{proof}

\section{CM-Forms} \label{sec:CM}
Another subspace we would like to exclude as non genuine is the one generated by CM-newforms. They are the ones whose corresponding automorphic representation occurs as the automorphic induction of a suitable Hecke character of a quadratic extension $M/K$. Let us denote this subspace by $S_k^{\textrm{CM}}(\mathfrak{n})$. It is quite often the case that $S_k^{\textrm{CM}}(\mathfrak{n})\subseteq S_k^{\textrm{BC}}(\mathfrak{n})$, but not always. Nevertheless one can easily prove the following:

\begin{thm}\label{thm:CM}
Let $K$ and $\mathfrak{n}$ be as in Theorem \ref{thm:twist-2}. Moreover assume that $h_K$ is odd. Then for all $k\geq 2$,
$$\dim S_k^{\textrm{CM}}(\mathfrak{n}) = 0.$$ 
\end{thm}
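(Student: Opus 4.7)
The plan is to rule out CM Bianchi newforms in $S_k(\mathfrak{n})$ by showing that any such form would force the underlying quadratic extension $M/K$ to be everywhere unramified, after which the odd-class-number hypothesis yields a contradiction via class field theory.

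Concretely, I will assume for contradiction that $\pi \in S_k^{\mathrm{CM}}(\mathfrak{n})$ is written as $\pi = \mathrm{AI}_M^K(\chi)$ for a quadratic extension $M/K$ and a Hecke character $\chi$ of $M$ with $\chi \neq \chi^\sigma$, where $\sigma$ generates $\mathrm{Gal}(M/K)$. Two local observations will drive the argument. First, at any prime $\mathfrak{p} \mid \mathfrak{n}$, the squarefree level and trivial nebentypus force $\pi_\mathfrak{p}$ to be a Steinberg representation (possibly twisted by an unramified quadratic character): an irreducible principal series $\chi_1 \boxplus \chi_2$ with trivial central character must have $\chi_2 = \chi_1^{-1}$, so both factors share the same conductor exponent and the total is even, incompatible with total exponent $1$; and supercuspidals have conductor exponent at least $2$. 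However, via local Langlands the Weil--Deligne representation attached to $\pi_\mathfrak{p}$ is the induction $\Ind_{W_{M_\mathfrak{P}}}^{W_{K_\mathfrak{p}}}(\chi_\mathfrak{P})$, which has trivial monodromy $N = 0$, whereas Steinberg has $N \neq 0$. Hence $\mathfrak{n}$ has no prime divisor, and $N = 1$. Second, with $N = 1$, $\pi$ is unramified at every finite place; if some $\mathfrak{p}$ were ramified in $M/K$, restricting $\Ind_{W_{M_\mathfrak{P}}}^{W_{K_\mathfrak{p}}}(\chi_\mathfrak{P})$ to the inertia group $I_{K_\mathfrak{p}}$ would pick up the non-trivial character of the order-$2$ quotient $I_{K_\mathfrak{p}}/I_{M_\mathfrak{P}}$ as a summand, making $\pi_\mathfrak{p}$ ramified---impossible. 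Therefore $M/K$ is unramified at every finite prime of $K$.

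Since the unique archimedean place of $K$ is complex, any extension is automatically unramified there; so $M/K$ is unramified everywhere. By class field theory, $M$ is contained in the Hilbert class field $H_K/K$ of degree $h_K$, and a quadratic subextension of $H_K/K$ exists if and only if $2 \mid h_K$---a direct contradiction with the odd-class-number hypothesis. I expect the most delicate step to be the local case-check at primes dividing $\mathfrak{n}$, which requires ruling out principal series and supercuspidal local components and then contrasting the Weil--Deligne monodromy of Steinberg ($N \neq 0$) with that of any local induction from a character ($N = 0$) to preclude CM origin at such primes.
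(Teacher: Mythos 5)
Your argument is correct, and its global skeleton is exactly the paper's: reduce to the statement that the CM extension $M/K$ must be everywhere unramified, then contradict the oddness of $h_K$ by class field theory (no quadratic subextension of the Hilbert class field). Where you diverge is in the local step. The paper disposes of ramification in one line by equating conductors: at any prime $\mathfrak{p}\mid\mathfrak{d}_{M/K}$, the trivial nebentypus forces the character being induced to be ramified there, so the conductor of the induced form is divisible by $\mathfrak{p}^2$, which is incompatible with $\mathfrak{n}$ square-free. You instead argue in two stages: first that no prime can divide $\mathfrak{n}$ at all, because conductor exponent $1$ with trivial central character forces an unramified twist of Steinberg, which an induction from a character can never produce (monodromy $N=0$ versus $N\neq 0$); and second that an everywhere-unramified $\pi$ forces $M/K$ unramified, since inducing through a ramified quadratic extension always introduces the ramified character of $I_{K_\mathfrak{p}}/I_{M_\mathfrak{P}}$. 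Both local analyses are sound. Yours is longer but extracts the stronger intermediate fact that a CM newform under these hypotheses would have to have level one; the paper's conductor comparison is more direct and makes no reference to the monodromy operator. Either way the conclusion is the same, and your write-up correctly handles the one point the paper leaves implicit, namely that the archimedean place of an imaginary quadratic field imposes no ramification condition.
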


\begin{proof}
Let $f$ be a newform that supposedly lies in $S_k^{\textrm{CM}}(\mathfrak{n})$. Let $\psi$ be the Hecke character of some extension $M/K$ whose automorphic induction is $f$. We then have that $L(f,s)=L(\psi,s)$. Equating local components at any prime $\p$ dividing the discriminant $\mathfrak{d}_{M/K}$ we see that $\p^2$ should divide~$\mathfrak{n}$. 
Since $\mathfrak{n}$ is assumed to be square-free no such $\p$ should exist and thus the extension $M/K$ is unramified. 
This provides the desired contradiction since the class number of $K$ is assumed to be odd.
\end{proof}

%

\section{Total non-genuine subspace} \label{sec:total}

In this section we provide a formula for the total dimension of the space generated by newforms that are "non-genuine" in any of the three notions discussed above, i.e. a formula for
$$\dim \big(S_k^{\textrm{BC}}(\mathfrak{n}) + S_k^{\textrm{tBC}}(\mathfrak{n})+S_k^{\textrm{CM}}(\mathfrak{n})\big).$$
Let us denote this space by $S_k^{\textrm{nG}}(\mathfrak{n})$. Then combining Theorem~\ref{thm:twist-2}, Theorem~\ref{thm:CM} and Proposition~\ref{prop:formula} yields the following:
\begin{thm}\label{thm:trivial}
Let $K$ be an imaginary quadratic field of discriminant $D_K$ and odd class number $h_K$, and $N\geq1$ be a square-free integer, coprime to $D_K$. 
 Let $\mathfrak{n} = N \Oc_K$. Then
\begin{equation}\label{eq:main}
  \begin{split}
\dim S_k^{\textrm{nG}}(\mathfrak{n}) &= h_K \dim S_k^{\textrm{BC}}(\mathfrak{n})\\
  &= h_K\sum_{d|D^2_K}  \frac{1}{2^{|S(d)|}}\bigg(\dim S_k^{d\textrm{-sc,new}}(\Gamma_0(Nd), \omega_d) - \dim S_k^{d\textrm{-sc,new}}(\Gamma_0(Nd), \omega_d)^{\textrm{CM}_K}\bigg).
  \end{split}
\end{equation}
\end{thm}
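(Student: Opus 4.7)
The plan is to decompose $S_k^{\textrm{nG}}(\mathfrak{n})$ into its three defining subspaces, eliminate the CM part using the odd class number hypothesis, and then reduce the remaining sum to a disjoint union of Hecke eigensystems whose total count is $h_K\cdot\dim S_k^{\textrm{BC}}(\mathfrak{n})$. Finally, the explicit sum in the second line is just Proposition~\ref{prop:formula} substituted into the first line.

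First, I would invoke Theorem~\ref{thm:CM}: since $h_K$ is odd, $S_k^{\textrm{CM}}(\mathfrak{n})=0$, so
\[
S_k^{\textrm{nG}}(\mathfrak{n}) \;=\; S_k^{\textrm{BC}}(\mathfrak{n}) + S_k^{\textrm{tBC}}(\mathfrak{n}).
\]
Both summands on the right are spanned by Hecke newforms (the left by definition, and the right because twisting a newform by a Hecke character of trivial conductor yields a newform of the same level). So the dimension of the sum equals the total number of distinct newform Galois orbits appearing across the two spaces, and it suffices to verify that no Base-Change newform simultaneously arises as a twist $g\otimes\epsilon$ for a non-Base-Change character $\epsilon$.

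Next, I would show this disjointness. By the analysis in Theorem~\ref{thm:twist-2}, the only characters $\epsilon$ that can realise a newform of level $\mathfrak{n}$ as $\BC_\Q^K(g)\otimes \epsilon$ without raising the level are characters of trivial conductor, i.e.\ characters factoring through the class group. Among these, $h_K-1$ are non-Base-Change (equivalently: not Galois-stable). Since a Base-Change newform has a Galois-invariant automorphic representation, while twisting it by a character $\epsilon$ that is not Galois-stable produces a representation whose Hecke parameters at split primes differ from their Galois conjugates, the twists cannot coincide with any Base-Change newform. Therefore the spans $S_k^{\textrm{BC}}(\mathfrak{n})$ and $S_k^{\textrm{tBC}}(\mathfrak{n})$ are linearly independent, so
\[
\dim S_k^{\textrm{nG}}(\mathfrak{n}) \;=\; \dim S_k^{\textrm{BC}}(\mathfrak{n}) + \dim S_k^{\textrm{tBC}}(\mathfrak{n}).
\]

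Plugging in Theorem~\ref{thm:twist-2} for the second term gives $\dim S_k^{\textrm{nG}}(\mathfrak{n}) = (1+(h_K-1))\dim S_k^{\textrm{BC}}(\mathfrak{n}) = h_K\dim S_k^{\textrm{BC}}(\mathfrak{n})$, which is the first equality in \eqref{eq:main}. The second equality is then immediate from Proposition~\ref{prop:formula}. The main obstacle I anticipate is the disjointness verification in the previous paragraph: one must be careful to distinguish between newforms and their underlying automorphic representations, and to check that the twists by the $h_K-1$ non-Galois-stable class group characters produce genuinely distinct Hecke eigensystems both from one another and from the Base-Change ones; this is the content of Theorem~\ref{thm:twist-2}'s proof and must be invoked cleanly to avoid double-counting.
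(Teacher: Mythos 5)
Your proposal is correct and follows essentially the same route as the paper, which gives no separate proof but simply combines Theorem~\ref{thm:CM} (to kill the CM summand when $h_K$ is odd), Theorem~\ref{thm:twist-2} (to get the factor $h_K-1$), and Proposition~\ref{prop:formula} (for the explicit sum). Your added verification that $S_k^{\textrm{BC}}(\mathfrak{n})$ and $S_k^{\textrm{tBC}}(\mathfrak{n})$ intersect trivially is a point the paper leaves implicit, and it is a worthwhile clarification rather than a departure from the paper's argument.
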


\section{Dimension formulas} \label{Dimension formulas}

In this section we provide explicit formulas/algorithm for the right hand side of the formula in Theorem \ref{thm:trivial}. We split the computation into two parts, each involving one of the two main ingredients of each summand in the right hand side of (\ref{eq:main}). 

\subsection{Computing $\dim S_k^{d\textrm{-sc,new}}(\Gamma_0(Nd), \omega_d)$}

Our starting point is Proposition 4.18 in \cite{FGT} which we provide here for convenience.
Here, we denote by $S_k(\Gamma(N))$ the space of weight $k$ elliptic modular forms
for the principal congruence subgroup $\Gamma(N) \subseteq$ SL$(2,\Z)$ of level $N$.
\begin{prop}[Finis, Grunewald, Tirao]\label{prop:key}
Let $N \geq 1$ and $k \geq 2$ be integers and $\sigma$ a representation of $G_N = {\rm SL}(2,\Z/N\Z)$
such that $\sigma(-I_2)$ is the scalar $(-1)^k$. Let $U_N \subseteq G_N$ be the subgroup of all upper triangular unipotent elements and $S_3$ and $S_4$ the images in $G_N$ of elements of
${\rm SL}(2,\Z)$ of order $3$ and $4$, respectively. Then,
$$\dim \Hom_{G_N}(S_k(\Gamma(N)), \sigma) = \frac{k-1}{12}\dim\sigma - \frac{1}{2}\dim\sigma^{U_N} + \epsilon_k\Tr\sigma(S_3) + \mu_k \Tr\sigma(S_4) + \delta_{k,2}\dim \sigma^{G_N}.$$
\end{prop}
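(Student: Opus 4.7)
My plan is to reinterpret the Hom space as a space of vector-valued holomorphic cusp forms of weight $k$ for $\SL(2,\Z)$, and then apply the Riemann--Roch / dimension formula on the compactified modular curve $X(1)$.

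First, using semisimplicity of complex $G_N$-representations, I would identify
$$\Hom_{G_N}\bigl(S_k(\Gamma(N)),\sigma\bigr) \;\cong\; \bigl(S_k(\Gamma(N)) \otimes \sigma^{\ast}\bigr)^{G_N}.$$
The assumption $\sigma(-I_2) = (-1)^k$ is precisely what makes $-I_2 \in G_N$ act trivially on the tensor product, since the weight-$k$ slash operator acts on $S_k(\Gamma(N))$ by the scalar $(-1)^k$ at $-I_2$. Consequently these invariants are canonically the space of holomorphic cusp forms of weight $k$ for $\SL(2,\Z)$ valued in $\sigma^{\ast}$, i.e.\ the global sections of the weight-$k$ automorphic line bundle on $X(1)$ twisted by the local system associated to $\sigma^{\ast}$.

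Next, I would apply Riemann--Roch on $X(1)$, tracking the three distinguished points: the two elliptic points at $i$ and $\rho$, whose $\SL(2,\Z)$-stabilizers are the cyclic subgroups containing $S_4$ and $S_3$ respectively, and the single cusp at $\infty$, whose stabilizer in $G_N$ is $U_N$. The generic degree contribution is $\tfrac{k-1}{12}\dim\sigma$. The cusp produces the correction $-\tfrac{1}{2}\dim\sigma^{U_N}$: only the $U_N$-invariants in the fiber over $\infty$ can carry a non-vanishing constant Fourier coefficient, and the cusp condition forces that invariant component to vanish. At each elliptic point, the local contribution counts eigenvectors of $\sigma(S_m)$ whose eigenvalue is compatible with the weight-$k$ twist of the cotangent space; reorganising these multiplicities via character orthogonality on the cyclic group $\langle S_m\rangle$ repackages them as $\epsilon_k\Tr\sigma(S_3)$ and $\mu_k\Tr\sigma(S_4)$, with $\epsilon_k$ and $\mu_k$ depending only on $k\bmod 3$ and $k\bmod 4$. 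The final $\delta_{k,2}\dim\sigma^{G_N}$ term is the weight-$2$ correction compensating for the non-vanishing of the top cohomology group on $X(1)$, which contributes exactly through the trivial $G_N$-isotypic part.

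The main obstacle is the local analysis at the elliptic points. For each $m \in \{3,4\}$ one must decompose $\sigma^{\ast}$ into $\sigma^{\ast}(S_m)$-eigenspaces, determine for which eigenvalues the local holomorphicity constraint at the corresponding elliptic point is compatible with the weight-$k$ twist, and sum the surviving dimensions. Character orthogonality on $\langle S_m\rangle$ then converts this count into an affine combination of $\dim\sigma$ and $\Tr\sigma(S_m)$, producing the universal coefficients $\epsilon_k$ and $\mu_k$. The cusp and Riemann--Roch parts are routine once the automorphic bundle has been set up correctly, and the initial identification is purely formal, so assembling these ingredients immediately gives the claimed formula.
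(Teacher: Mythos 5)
The paper contains no proof of this proposition: it is imported verbatim as Proposition~4.18 of \cite{FGT} and used as a black box, so there is no internal argument to measure your sketch against. On its own terms, your outline follows the standard route, which is essentially how the result is established in the source: identify $\Hom_{G_N}(S_k(\Gamma(N)),\sigma)$ with $\bigl(S_k(\Gamma(N))\otimes\sigma^{\ast}\bigr)^{G_N}$, view this as the space of $\sigma^{\ast}$-valued weight-$k$ cusp forms on the modular orbifold $X(1)$ (your observation that the hypothesis $\sigma(-I_2)=(-1)^k$ is exactly what makes $-I_2$ act trivially on the tensor product is correct and is the right use of that assumption), and then run an equivariant Riemann--Roch or holomorphic Lefschetz computation in which the order-$3$ and order-$4$ elliptic points produce the $\Tr\sigma(S_3)$ and $\Tr\sigma(S_4)$ terms, the cusp produces the $U_N$-term, and weight $2$ picks up the $\dim\sigma^{G_N}$ correction from the nonvanishing $H^1$.

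Two points need repair before this is a proof. First, your justification of the coefficient $-\tfrac{1}{2}\dim\sigma^{U_N}$ is wrong as stated: if the only mechanism were ``the $U_N$-invariant part of the constant Fourier coefficient must vanish,'' the coefficient would be $-1$, not $-\tfrac{1}{2}$. The $\tfrac12$ arises from the full bookkeeping at the cusp, namely from rewriting the cusp contribution $(\tfrac{k}{2}-1)$ per invariant line as $\tfrac{k-1}{2}-\tfrac12$ and absorbing the $\tfrac{k-1}{2}$ piece into the normalized main term $\tfrac{k-1}{12}\dim\sigma$, together with the fact that the nontrivial eigenvalues $e^{2\pi i a/N}$ of $\sigma$ on a generator of $U_N$ contribute fractional parts that pair off and disappear from the stated formula. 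This is exactly the kind of constant a ``routine'' Riemann--Roch argument can get wrong, so it cannot be waved through. Second, you defer the entire eigenvalue analysis at the elliptic points, which is where the constants $\epsilon_k$ and $\mu_k$ actually live; the proposition as quoted in the paper also leaves them unspecified, but a self-contained proof must carry out that computation (and verify it reduces to an affine combination of $\dim\sigma$ and $\Tr\sigma(S_m)$, which requires the eigenvalue multiplicities of $\sigma(S_m)$ to enter only through the character). With those two gaps filled, your outline assembles into a correct proof of the formula.
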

The constants $\epsilon_k$ and $\mu_k$ are explicit functions of $k$ and $\delta_{k,2}$ is the usual Kronecker delta notation.
In what follows, given a subspace of newforms $B\subseteq S_k(\Gamma(N))$, we will say that $\sigma$ \emph{defines} $B$ if $\Hom_{G_N}(S_k(\Gamma(N)), \sigma) \cong B$. We thus need to identify a suitable $\sigma$ for each subspace involved in the last line of (\ref{eq:main}). In fact we only need to compute the following five invariants associated to such
a sigma:
$$\dim \sigma,\ \dim\sigma^{U_N},\ \Tr\sigma(S_3),\ \Tr\sigma(S_4),\ \dim \sigma^{G_N}.$$
We will denote them by $I_i(\sigma)$, $i\in\{1,2,3,4,5\}$ respectively. It is important to notice that $\sigma$, and therefore the five invariants associated with it, depends only on the level structure of the
subspace that is of interest to us and not the weight. It is also clear that the following properties hold:
$$I_i(\sigma\oplus\sigma')=I_i(\sigma) + I_i(\sigma')$$
and
$$I_i(\otimes_p\sigma_p)=\prod_pI_i(\sigma_p)$$
for all $\sigma=\otimes_p\sigma_p$, $\sigma'$ and for all $i\in\{1,2,3,4,5\}$.

Let us fix one of the spaces in the right hand side of the formula in Theorem \ref{thm:trivial}, $S_k^{d\textrm{-sc,new}}(\Gamma_0(Nd), \omega_d)$ say, and let $\sigma$ be the representation defining it. In view of the properties just mentioned, we will compute the $I_i(\sigma)$ by determining the $N$-part and the $d$-part separately.

In order to compute the $N$-part, it is enough to notice that it corresponds to the one defining $S_k^{\textrm{new}}(\Gamma_0(N))$. Given an integer $N\geq1$, let $\sigma^{N}=\otimes_p \sigma^{N}_p$ and $\sigma^{N,\textrm{new}}=\otimes_p \sigma^{N,\textrm{new}}_p$ be the representation of $G_N$ defining $S_k(\Gamma_0(N))$ and $S_k^{\textrm{new}}(\Gamma_0(N))$ respectively. It is immediate then that $\sigma^{N}_p=\sigma^{p^e}$ and $\sigma^{N,\textrm{new}}_p=\sigma^{p^e,\textrm{new}}$, where $p^e||N$. Moreover it is easy to see that
$$\dim S_k^{\textrm{new}}(\Gamma_0(p^e)) = \dim S_k(\Gamma_0(p^e)) - 2\dim S_k(\Gamma_0(p^{e-1})) + \dim S_k(\Gamma_0(p^{e-2})),$$
for any $e\geq 0$, with the understanding that the dimensions mentioned are $0$ if the exponent of $p$ becomes negative. This in turn implies the following formula:
\begin{equation}\label{eq:newsubdim}
I_i(\sigma^{p^e,\textrm{new}}) = I_i(\sigma^{p^e}) - 2I_i(\sigma^{p^{e-1}}) + I_i(\sigma^{p^{e-2}}).
\end{equation}
As before, the $I_i$'s involved are $0$ if the corresponding exponent of $p$ is negative. Formulas for the right hand side are provided in \cite{CO} and we also state them here for convenience:
\begin{align*}
I_1(\sigma^{p^e}) & = 	\begin{cases}
					1						& e=0\\
					p^{e-1}(p+1) 				& e\geq 1\\
					\end{cases}\\
I_2(\sigma^{p^e}) & = 	\lambda(e,0,p) = \begin{cases}
					1						&e=0\\
					2p^n 					&e=2n+1\\
					p^n+p^{n-1}				&e=2n\geq2\\
					\end{cases}\\
I_3(\sigma^{p^e}) & = 	\#\{x \mod p^e| x^2+x+1=0\}=\begin{cases}
					1						&e=0\textrm{ or }p^e=3\\
					1 + \Big(\frac{-3}{p}\Big)	&e\geq 1\textrm{ and }p\neq3\\
					0						&e\geq2\textrm{ and }p=3\\
					\end{cases}\\
I_4(\sigma^{p^e}) & = 	\#\{x \mod p^e| x^2+1=0\}=\begin{cases}
					1						& e=0\textrm{ or }p^e=2\\
					1 + \Big(\frac{-1}{p}\Big)	&e\geq1\textrm{ and } p\neq 2\\
					0						&e>1\textrm{ and } p=2\\
					\end{cases}\\
I_5(\sigma^{p^e}) & = 	1
\end{align*}
where $\lambda(sr_p, s_p, p)$ is the one defined in \cite{CO}. Using (\ref{eq:newsubdim}) and the above one easily gets:
\begin{align}
I_1(\sigma^{p^e,\textrm{new}})  &= 	\begin{cases}
								1 					& e= 0\\
								p-1 					& e = 1\\
								p^2-p-1 				& e = 2\\
								p^{e-3}(p-1)^2(p+1) 	& e \geq 3\\
								\end{cases}\label{eq:I1new}\\
I_2(\sigma^{p^e,\textrm{new}}) & = 	\begin{cases}
								1 					& e= 0\\
								0					& e=2n+1\\
								p-2					& e= 2\\
								p^{n-2}(p-1)^2		& e= 2n\geq 4\\
								\end{cases}\\
I_3(\sigma^{p^e,\textrm{new}})  &= 	\begin{cases}
								1 					& e= 0\textrm{ or }p^e=3^3\\
								\Big(\frac{-3}{p}\Big)-1	& e=1\textrm{ and }p\neq3\\
								-\Big(\frac{-3}{p}\Big)	& e=2\textrm{ and }p\neq3\\
								-1					&p^e=3\textrm{ or }3^2\\
								0 					&\textrm{otherwise}\\
								\end{cases}
\end{align}
\begin{align}
I_4(\sigma^{p^e,\textrm{new}})  &= 	\begin{cases}
								1 					& e= 0\textrm{ or }p^e=2^3\\
								\Big(\frac{-1}{p}\Big)-1	& e=1\textrm{ and }p\neq2\\
								-\Big(\frac{-1}{p}\Big)	& e=2\textrm{ and }p\neq2\\
								-1					&p^e=2\textrm{ or }2^2\\
								0 					&\textrm{otherwise}\\
								\end{cases}\\
I_5(\sigma^{p^e,\textrm{new}})  &= 	\begin{cases}
								1 					& e= 0\\
								-1					& e=1\\
								0 					& e\geq 2\\
								\end{cases}\label{eq:I5new}
\end{align}
Let $\tau^d$, where $d|D_K^2$, be the $d$-part of the representation $D_K$. The $I_i$'s for each $\tau^d_\ell$, with $\ell|d$, have already been determined in \cite{FGT}. We then have that $\sigma = \sigma^N\otimes \tau^d$ defines $S_k(\Gamma_0(Nd), \omega_d)$. Moreover $I_i(\sigma^N\otimes \tau^d) = I_i(\sigma^N)I_i(\tau^d)$ and we have explicit formulas for both terms in the right hand side. 
$$\dim S_k(\Gamma_0(Nd), \omega_d) = \frac{k-1}{12}I_1(\sigma^N)I_1(\tau^d)+\cdots.$$
If one lets $I_i(\sigma_p)=1$ for all $i$ and all $p\nmid D_K$ then one gets the formulas derived in \cite{FGT}.

\subsection{Computing $\dim S_k^{d\textrm{-sc,new}}(\Gamma_0(Nd), \omega_d)^{\textrm{CM}_K}$}
We follow the method used in \cite{Tsaknias2012a} to count CM newforms of a given level, weight and Nebentypus. As explained there, every newform has associated to it a pair of characters $(\psi_f, \psi_\infty)$ that satisfies a certain compatibility condition. Moreover, for every such compatible pair, there are $h_K$ many newforms associated with it. The problem is thus reduced to counting compatible pairs that match the prescribed level, weight and Nebentypus. The infinity component is always uniquely determined by the weight, so we only have to count all the $\psi_f$'s that correspond to the given level and Nebentypus and are compatible with the given weight. Finally every such $\psi_f$ is determined by its local components at every prime $\p$ of $\Oc_K$, so we count the possible choices for each of those and multiply them to get the final answer.

Recall that $Nd=\Nm(\mathfrak{m}) |D_K|$, where $\mathfrak{m}$ is the conductor of $\psi_f$. Since $N$ and $D_K$ are coprime, every prime dividing $N$ should divide $\Nm(\mathfrak{m})$ too. Moreover, a prime $\ell|d$ divides $\Nm(\mathfrak{m})$ if and only if $\ell^2|D_K$. We will treat primes $\p$ separately, depending on their ramification and residue class degree, $e_\p$ and $f_\p$ respectively. We start with primes $p$ whose residue characteristic is greater than $2$.

\subsubsection{p inert} In this case $p=\p$ and $\Nm(\p)=p^2$, so the exponent of $p$ in $N$ should be even. Assume that this is the case: $p^{2n}||N$, for some $n\geq1$. We then get that the conductor of $\psi_f$ has to be divisible exactly by  $\p^n$. As we can see in \cite{Ranum1910}, $(\Oc/\p^n)^*$ is generated by three independent generators: $\xi$, $1+p$ and $1+p\omega$ of order $p^2-1$, $p^{n-1}$ and $p^{n-1}$ respectively. The residues of rational integers form a subgroup isomorphic to $(\Z/p^n\Z)^*$ which is generated by $\xi^{p+1}$ and $1+p$.Since the restriction of $\psi_f$ on $\Z^*$ is predetermined by the nebentypus
we have unique choices for $\xi^{p+1}$ and $1+p$. In our case the nebentypus is trivial and therefore $\psi_f(\xi^{p+1})=1$ and $\psi_f(1+p)=1$. If $n=1$, then the only generator to consider is $\xi$ and since we need the conductor at $\p$ to be $\p$ we have to exclude $1$ from the possible values of $\psi_f(\xi)$, which leaves $p$ choices in total. If however $n>1$ then in order for the conductor at $\p$ to be $\p^n$ we need either $\psi_f(1+p)$ or $\psi_f(1+p\omega)$ to be a primitive $p^{n-1}$-th root of  unity. Since $\psi_f(1+p)=1$ we get that $\psi_f(1+p\omega)$ must satisfy this condition which leaves $\varphi(p^{n-1})=p^{n-2}(p-1)$ choices. In this case all $p+1$ choices for $\psi_f(\xi)$ 
are permitted, so we get in total $p^{n-2}(p^2-1)$ many choices.

\subsubsection{$p$ split} In this case $p=\p\bar\p$ and $\Nm(\p)=\Nm(\bar\p)=p$. Let $p^t||N$ for some $t\geq1$. Then the $p$-part of the conductor of $\psi_f$ is apriori of the form $\p^\alpha\bar\p^\beta$ for any $0\leq\alpha,\beta\leq t$ such that $\alpha+\beta=t$. We will first show that in our case $\alpha=\beta=n\geq1$ and therefore $t$ must be even. We have the following group homomorphisms:
$$(\Z/p^{\max\{a,b\}}\Z)^*\hookrightarrow (\Oc_K/\p^\alpha\bar\p^\beta)^*$$
and
$$(\Oc_K/\p^\alpha\bar\p^\beta)^*\cong (\Oc_K/\p^\alpha)^* \times (\Oc_K/\bar\p^\beta)^*.$$
Notice that $1+p$ will map to $(1+p, 1+p)$ after composing the two maps above. Since we assume that the $p$-part of the conductor of $\psi_f$ is $\p^\alpha\bar\p^\beta$, we get that $\psi_f(1+p)$ must be a primitive $p^{\alpha-1}$-th root of unity, as well as a $p^{\beta-1}$-th one. This can of course only happen if $\alpha=\beta=n$.

Let's go back to counting all possible characters of $O_K^*$ of conductor $\p^n\bar\p^n$. Having in mind the isomorphism above, any such character is completely determined by the images of the generators of $ (\Oc_K/\p^\alpha)^*$ and $(\Oc_K/\bar\p^\beta)^*$. Wlog we can pick $1+p$ and $\delta$, of order $p^{n-1}$ and $p-1$ respectively, to generate both groups. Like before, the restriction of $\psi_f$ to $\Z$ is determined by the nebentypus. For the $p$-part we have that the two are in fact equal and since the nebentypus has trivial $p$-part we get the same for the $p$-part of $\psi_f|\Z$. This means that $(1+p, 1+p)$ and $(\delta, \delta)$ should map to $1$ and we therefore have that
$$\psi_f((1+p,1))=\psi_f((1,1+p))^{-1}$$
and
$$\psi_f((\delta,1))=\psi_f((1,\delta))^{-1}.$$
Apriori, $\psi_f((\delta,1))$ has $p-1$ choices. If however $n=1$, then this $\delta$ is the only generator and if it has trivial image, the conductor then becomes lower, which leaves $p-2$ choices. If $n>1$, the conductor condition is satisfied by restricting $\psi_f((1+p,1))$ to be a primitive $p^{n-1}$-th root of unity. This gives $p-1$ choices for the image of $\delta$ and $\varphi(p^{n-1})=p^{n-2}(p-1)$ many choices for that of $1+p$.

\subsubsection{p ramified} In this case $p=\p^2$ and $\Nm(\p)=p$. Assume that $p^t||d$ (remember that $N$ and $D_K$ are coprime). Assuming that $p^u||D_K$ (and since $p$ is ramified we also have that $u\geq1$), we get that $p^{t-u}||\Nm(\mathfrak{m})$ so the $p$-part of the conductor of $\psi_f$ should be $\p^{t-u}$. If $t=1$, we clearly have a unique choice for the $p$-part of $\psi_f$ which happens to match the Nebentypus too. If $t=2$, we are looking for non-trivial characters of $(\Oc_K/\p)^*$, which are apriori $p-2$ many. The nebentypus condition determines these characters uniquely on $(\Z/p\Z)^*$, which happens to be isomorphic to  $(\Oc_K/\p)^*$. The unique character that is left is actually non-trivial so we have a unique choice.

Summarizing all of the above:
$$CM(p^t)=\begin{cases}
1&t=0\\
1&t=1\textrm{ and $p$ ramified}\\
0&t=1\textrm{ and $p$ unramified}\\
1&t=2\textrm{ and $p$ ramified}\\
p-2&t=2\textrm{ and $p$ split}\\
p&t=2\textrm{ and $p$ inert}\\
0&t=2n+1\geq3\\
p^n(p-1)^2&t=2n\geq4\textrm{ and $p$ split}\\
p^n(p^2-1)&t=2n\geq4\textrm{ and $p$ inert}
\end{cases}$$
Recall that it is easy to determine whether a prime is split, inert or ramified in $K$ simply computing $\Bigg(\frac{D_K}{p}\Bigg)$. Putting everything together we get:
\begin{equation}\label{eq:CMEisdim}
\dim S_k^{d\textrm{-sc,new}}(\Gamma_0(Nd), \omega_d)^{\textrm{CM}_K} = \prod_{p^t||Nd}CM(p^t).
\end{equation}
Notice that the formula above works for any $N$ coprime to $D_K$, not just square-free. Specializing to square-free $N$ we get:
$$\dim S_k^{d\textrm{-sc,new}}(\Gamma_0(Nd), \omega_d)^{\textrm{CM}_K}=\begin{cases}
1 & N = 1 \textrm{ and } \textrm{rad}(D_K)|d,\\
0 & \textrm{otherwise}.
\end{cases}$$
where $\textrm{rad}(D_K)$ is the product of all primes dividing $D_K$.

Hopefully it should be obvious by now that we have sketched an algorithm that computes $\dim S_k^{\textrm{nG}}(\mathfrak{n})$ in an elementary way.

\subsection{The case $K=\Q(\sqrt{-p})$, $p\equiv3\mod 4$ and $\mathfrak{n}=(p)$.} \label{level a power of the discriminant}
Let $p\equiv3\mod 4$ be a prime number, and $(p) = \mathfrak{p}^2$ in $\Oc_K$
for $K=\Q(\sqrt{-p})$.
In this section, we essentially describe all the ingredients for a dimension formula in the cases where the level $\mathfrak{n}$ is $\mathfrak{p}$ or $\mathfrak{p}^2$.
For this purpose, we determine the parameters $I_i(\sigma_p)$ for all the suitable $\sigma_p$ in these two cases, 
further the twists of Base-Change, and show that there is no CM outside Base-Change. 
For simplicity, we restrict ourselves to the case where the class number $h_K$ of $K$ is 1.

Let us begin with $\mathfrak{n}=\mathfrak{p}$. In this case, the only possible type over $K$ of level $\mathfrak{p}$ is unramified Steinberg. 
It is fairly easy to see that the only type over $\Q$ that can base-change to this is unramified Steinberg at $p$. This space over $\Q$ coincides with the new $\Gamma_0(p)$ space and the $\sigma$-parameters can be computed using (\ref{eq:I1new}) - (\ref{eq:I5new}):
$$I_1 = p-1, I_2 = 0, I_3= \Big(\frac{-3}{p}\Big) -1, I_4= -2, I_5 = -1.$$
Notice that the new $\Gamma_0(p)$ space over $\Q$ contains no CM forms.
Using the same arguments as in Theorem \ref{thm:twist-2} we see  that there are no twists of Base-Change in $\Gamma_0(\mathfrak{p})$. We also claim that there are no CM forms: Indeed, assume there exists one of level $\Gamma_0(\mathfrak{p})$, $f$ say. 
Then $f$ is automorphic induction of $\psi$ from $L$ to $K$,
where $L/K$ is a quadratic extension and $\psi$ is a Hecke character over~$L$. 
Since the L-series for $f$ and $\psi$ must be the same, we get that $L$ must be ramified at $\mathfrak{p}$ only. This is absurd since class field theory for $K$ tells us that it does not have any even degree abelian extensions ramified only at $\mathfrak{p}$. Summing everything up:
$$\dim S_k^{\textrm{nG}}(\mathfrak{p}) = \dim S_k^{\textrm{new}}(\Gamma_0(p)).$$
One can then use the $I_i$ parameters given above or use classical dimension formulas to compute the right hand side.

We move to the $\mathfrak{n}=\mathfrak{p}^2=(p)$ case. Let $\omega_\mathfrak{p}$ be the quadratic character of conductor $\mathfrak{p}$ and recall that $\omega_p$ is the quadratic character of conductor $p$. The following list summarizes the possible types over $K$ of level $\mathfrak{p}^2$ and for each one gives the possible types over $\Q$ 
that base-change to them:

\begin{itemize}
\item All the spaces listed in the trivial level case: After twisting their Base-Change by a quadratic character of conductor $\mathfrak{p}$ they become of level $\mathfrak{p}^2$.
The type at $\mathfrak{p}$ after Base-Change is again Principal Series, $I(\omega_\mathfrak{p}, \omega_\mathfrak{p})$.
\item Principal Series $I(\chi, \chi^{-1})$, where $\chi$ is of conductor $p$, non quadratic. Their Base-Change (or their twist by a suitable character of conductor $\mathfrak{p}$) becomes a newform of level $\mathfrak{p}^2$ and trivial nebentypus. The type at $\mathfrak{p}$
after Base-Change is again Principal Series, $I(\eta, \eta^{-1})$ with $\eta\neq \omega_\mathfrak{p}$.
\item Unramified Steinberg Series $\St(p)$. As mentioned in the level $\mathfrak{n}=\mathfrak{p}$ case, their Base-Change image are the Unramified Steinberg Series $\St(\mathfrak{p})$ over $K$. After twisting by a quadratic character of conductor $\mathfrak{p}$ they become newforms of level $\mathfrak{p}^2$ and trivial nebentypus. The type at $\mathfrak{p}$ after Base-Change is ramified Steinberg, $\omega_\mathfrak{p}\otimes\St(\mathfrak{p})$.
\item Finally all Supercuspidal series of level $p^2$ that are not part of the third case described in the trivial level situation. Their Base-Change image (and their twists by $\omega_\mathfrak{p}$) are Supercuspidal Series of level $\mathfrak{p}^2$ and trivial nebentypus. The type at $\mathfrak{p}$ after Base-Change is again Supercuspidal Series.
\end{itemize}

These four components comprise all the possible Base-Change as well as twists of it that can occur for level $\mathfrak{p}^2$ and trivial nebentypus. Table \ref{tbl:psquaredtypes} provides the formulas to compute the parameters $I_i(\sigma)$ in each case, which one needs in order to use Proposition \ref{prop:key} to compute the corresponding dimensions. Here $SC_3(p)$ and $SC_4(p)$ are the functions $\Tr S_3(p)$ and $\Tr S_4(p)$ respectively that are defined in the statement of Lemma 4.19 of \cite{FGT}, and $\textrm{CPS}(p)$ is defines as follows: 
$$\textrm{CPS}(p)=\begin{cases}
-2 &p\equiv 1 \mod 3\\
0 & \textrm{otherwise}\\
\end{cases}
$$

\begin{table}
\small
\begin{tabular}{|l||r|r|r|r|r|}
\hline
Type 											&dim $\sigma$		&dim $\sigma^{U_N}$	&tr$S_3$															&tr$S_4$								&dim $\sigma^{G_N}$ \\
\hline
$I(\omega_{\mathfrak{p}}, \omega_{\mathfrak{p}})$ 	&$\frac{p+1}{2}$		&$p-3$				&$1+\frac{1}{2}\textrm{SC}_3(p)$									&$0$								&$0$ \\
$I(\eta, \eta^{-1}), \eta \neq \omega_{\mathfrak{p}}$	&$\frac{(p-3)(p+1)}{2}$	&$1+h_K$			&$\textrm{CPS}(p)$												&$1+\frac{1}{2}\textrm{SC}_4(p)$		&$0$ \\
$\omega_{\mathfrak{p}}\otimes \St(\mathfrak{p})$		&$p-1$				&$0$				&$\Bigg(\frac{-3}{p}\Bigg)-1$										&$\Bigg(\frac{-1}{p}\Bigg)-1$			&$-1$ \\
Supercuspidal										&$\frac{(p-3)(p-1)}{2}$	&$p-2+h_K$			&$-2\Bigg(\frac{-3}{p}\Bigg)-\textrm{CPS}(p)-\textrm{SC}_3(p)$	&$-1-\frac{1}{2}\Bigg(\frac{-1}{p}\Bigg)$	&$0$ \\
\hline
\end{tabular}
\normalsize
\caption{Parameter values for the types contributing to Base-Change of level $\mathfrak{p}^2$}\label{tbl:psquaredtypes}
\end{table}

Finally we need to account for any CM forms not already present in the Base-Change subspace. In fact one can easily see that there are none. The argument is almost identical to the one given for the level $\mathfrak{p}$ case: The existence of any such CM form would imply the existence of a quadratic extension of $K$ ramified only at $\mathfrak{p}$ but no such extension exists.
At this point, anyone wanting to compute $\dim S_k^{\textrm{nG}}(\mathfrak{p}^2)$ has everything needed to do so.

\section{Final Remarks} \label{Final Remarks}
We would like to sum up here the cases for which we provide a complete answer:
\begin{itemize}
\item Odd Class Number, square-free level, coprime to the discriminant, trivial nebentypus.
\item Class Number $1$, $K=\Q(\sqrt{-p})$ for some prime $p\equiv 3\mod 4$ and level $\mathfrak{p}$ or $\mathfrak{p}^2$ where $\mathfrak{p}^2=(p)$.
\end{itemize}

Many of our arguments however provide partial answers to a broader range of cases:
\begin{itemize}
\item We have a complete description in (\ref{eq:I1new}) - (\ref{eq:I5new}) of the $\sigma$ parameters away from the discriminant as long as the nebentypus is trivial for any level, not just square-free ones. The generalization to non-trivial nebentypus should be quite straightforward but even more cumbersome to write down as single formulas.
\item For primes $p$ dividing the discriminant, the $0$ exponent case is the one treated in \cite{FGT} and we provide an answer for exponents $1$ and $2$ and trivial nebentypus in the case $p\equiv3\mod 4$.
\item Theorem \ref{thm:twist-2} applies to imaginary quadratic fields of even class number too, not only to the ones with an odd class number.
\item Finally the formula in (\ref{eq:CMEisdim}) allows one to compute the dimension of the classical newforms that base-change to Eisenstein forms for any $N$ coprime to the discriminant and trivial nebentypus, not just for the square-free levels.
\end{itemize}

\section{Computational results for the spaces of genuine forms} \label{sec:computational}
The authors have used the software \textit{Bianchi.gp} (\cite{Rahm11}, \cite{Rahm13})
to compute the necessary geometric-topological information about the whole Bianchi group,
and then applied a MAGMA \cite{magma} implementation by \c{S}eng\"un 
(for which we provide the algorithm in \cite{database})
to deduce the dimension of the relevant cohomology space for the congruence subgroup at level $\mathfrak{n}$,
passing by the Eckmann--Shapiro lemma.
They have then substracted the Eisenstein series space to get the cuspidal cohomology space,
which by the Eichler--Shimura(--Harder) isomorphism yields the cuspidal forms space.
Then they did substract the oldforms using a well-known recursive formula,
to get the dimensions of the newforms spaces $S_k(\mathfrak{n})$ defined in Section~\ref{sec:setting}.

\subsection{Level One}
We recall here a dimension computation of 4986 different spaces of cuspidal newforms at level $1$,
at varying discriminant $D$ (over 186 different imaginary quadratic fields) and varying weight $k+2$. 
The precise scope of our computations is given in~\cite{RahmSengun}. 
In only 22 of these spaces were we able to observe genuine forms. 
The precise data about these exceptional cases is provided in Table~\ref{exceptional}.
We note that in \cite{RahmSengun}, 
some further subspaces are tabulated, which are in fact populated by CM-forms (arising through automorphic induction).
We need this table in order to spot the twists of genuine level One forms to deeper levels.

\begin{table}
\begin{tabular}{|c|c|c|c|c|c|c|c|c|c|c|c|} \hline
$|D|$     &{\bf 7}   &{\bf 11} &{\bf 71} &{\bf 87}  &{\bf 91}   &{\bf 155}    &{\bf 199}   &{\bf 223}    &{\bf 231} &{\bf 339}  &{\bf 344}\\ \hline
$k+2$      &14         &12       &3        &4         &8          &6            &3           &2            &6         &3          &3          \\ \hline
$\dim$   &2          &2        &2        &2         &2          &2            &4           &2            &2         &2          &2         \\ \hline
\multicolumn{11}{c}{}\\ \hline

$|D|$     &{\bf 407}    &{\bf 415}     &{\bf 455}  &{\bf 483} &{\bf 571}  &{\bf 571}  &{\bf 643}   &{\bf 760} &{\bf 1003} &{\bf 1003 } &{\bf 1051}   \\ \hline
$k+2$       &2            &2             &2          &3         &2          &3          &2           &4         &2          &3           &2           \\ \hline
$\dim$    &2            &2             &2          &2         &2          &2          &2           &2         &2          &2           &2          \\ \hline
\end{tabular}
\caption{Level One cases where there are genuine classes}\label{exceptional}
\end{table}

\subsection{Level: the square of minus the discriminant}
We have compared the evaluation of our above formulas against our numerical results for the full space of cuspidal newforms.
At discriminant $-m$, at level $(m)$ (of norm $m^2$ and Hermite Normal Form (HNF) $[m^2,0,m]$)
and its divisors, we have carried out this computation from (automorphic form) weight~$2$ up to the following upper limits for the weight.
$$\begin{array}{|c|c|c|c|c|c|c|}
\hline
\text{Discriminant} & -7 & -11 & -19 & -43 & -67 \\
\hline
\text{weight up to} & 25 &  21 &  11 &  4  & 2  \\
\hline
\end{array}$$
The result is that of these 83 cuspidal newforms spaces, all are completely exhausted by (twists of) Base-Change, except for the following.
$$\begin{array}{|c|c|c|c|c|c|c|}
\hline
\text{Discriminant}                       & -7     & -7 & -11 & -11    & -11    & -43 \\
\hline
\text{weight}                             &  6     & 14 &  12 & 3      &  5     &  2 \\
\hline
\text{potentially genuine space dimension}& \bf{2} & 2  &  2  & \bf{4} & \bf{4} &  \bf{2} \\
\hline
\end{array}$$
Out of these, the forms at discriminant $-7$, weight $14$ and at discriminant $-11$, weight $12$ 
are twists of the genuine level One Bianchi modular forms already found by Grunewald~\cite{FGT}.
In the other cases, there are no level One forms that could be twisted~\cite{RahmSengun}.
That there are no CM-forms in the cases $m$ congruent to $3 \mod 4$, 
is guaranteed for arbitrary weight by Theorem~\ref{prop:formula}.
So the remaining spaces must be genuine, and we print them in boldface.


\subsection{Square-free levels} \label{sec:Square-free levels}

\begin{table}
 \centering
\begin{tabular}{|c|cc|cc|}
\hline  & Weight 2  &  & Higher weight (at least 3)&  \\
\hline Discriminant & \# spaces & \# genuine spaces & \# spaces & \# genuine spaces \\
\hline
-7  &  1174  &  355  &  556  &  17  \\
-11  &  1307  &  353  &  683  &  14  \\
-19  &  504  &  151  &  531  &  6  \\
-43  &  318  &  61  &  103  &  1  \\
-67  &  123  &  17  &  33  &  1 \\
-163  &  24  &  4   &  3  &  0  \\
\hline 
\end{tabular}
\caption{Overview of the sample presented in the Appendix.
Under ``\# spaces'', we count the newforms spaces that have been computed at the specified discriminant and varying level,
and under ``\# genuine spaces'', we count those of them which admit a non-trivial genuine subspace.}
\label{overview}
\end{table}

Let $n \in \Z$ be square-free and coprime to the discriminant of the imaginary quadratic field in question.
Consider the level $(n)$ of HNF $[n^2, 0, n]$. 
Then we have the formulas of Section~\ref{Dimension formulas} for the dimension of the space of base-changed forms of level $(n)$.
We have compared them against the machine computed dimension of the space of cuspidal newforms.
{At discriminant -19,} the range of this machine computation was as follows.
$$\begin{array}{|c|c|c|c|c|c|c|c|c|c|c|c|c|}
\hline
\text{Levels }n \text{ at discriminant -19}& 2,3 & 6  & 11 & 13 & 5  & 7, 15  & 14, 17 & 23 & 10 & 30 & 22, 31, 33\\
\hline
\text{weight up to}                        & 22   & 21 & 15 & 13 & 12 & 11     & 10     &  8 &  5 &  4 &  2\\
\hline
\end{array}$$
Out of these 154 spaces, only the following six spaces can admit genuine forms:
$$\begin{array}{|c|c|c|c|c|c|c|}
\hline
\text{Level}                              & 6    & 6 & 11 & 15     & 17     & 30 \\
\hline
\text{weight}                             & 3    & 4 & 2  &  2     & 2      & 2  \\
\hline
\text{genuine space dimension}            & 2    & 2 &  2  & 2     & 2      & 4 \\
\hline
\end{array}$$
As the level is square-free and the class group of the imaginary quadratic field is trivial, there are no twists of base-change forms.
By Theorem~\ref{thm:twist-2}, there are no CM forms.
So the above six spaces must be constituted of genuine forms.

The above sample of 154 spaces allows us to guess that genuine forms are more likely to occur at low weights than at high weights (supported by Table~\ref{overview});
and that levels which admit genuine forms at some low weights are rather unlikely to admit more of them at higher weights 
(supported by the Appendix).


\section{Appendix: Detailed results in the square-free level case}

In addition to the sample of Section~\ref{sec:Square-free levels},
we include in the following tables a range of square-free ideals which are not Galois-stable (in our setting of imaginary quadratic fields, not totally real).
At each discriminant, we first specify the range of the pertinent machine computation, 
and then in a separate table the spaces with a non-trivial genuine subspace.
The computation usually was run also at the Galois-conjugated level, but in the range tables, 
we print only one HNF per pair of Galois-conjugate levels.
We omit the details for weight $2$, and outsource them to~\cite{database}.
A statistical overview is given in Table~\ref{overview}.
The bottleneck for the computations were the memory requirements; processor time aspects were completely eclipsed by them. 
With the quadratic (in the weight) growth of the coefficient modules, the MAGMA program did grow its memory requirements quadratically.

\begin{center}
\footnotesize \begin{tabular}{|c|c|} \hline {\bf Range at discriminant  -19.} Level HNFs, up to Galois conjugacy & {weights} \\
\hline 
164  levels of norm up to  1145
 and their Galois conjugates & only 2 \\ 
\hline \begin{tabular}{c} $ 
[ 140, 10, 2 ]
,
[ 140, 30, 2 ]
,
[ 153, 39, 3 ]
,
[ 161, 12, 1 ]
,
[ 161, 127, 1 ]
,
[ 172, 28, 2 ]
,
[ 175, 25, 5 ]
,
[ 187, 13, 1 ]
,
$ \\ 
 $
[ 187, 156, 1 ]
,
[ 188, 12, 2 ]
,
[ 191, 154, 1 ]
,
[ 197, 150, 1 ]
,
[ 199, 128, 1 ]
,
[ 207, 30, 3 ]
,
[ 215, 100, 1 ]
,
[ 215, 14, 1 ]
,
$ \\ 
 $
[ 220, 38, 2 ]
,
[ 220, 48, 2 ]
,
[ 229, 139, 1 ]
,
[ 233, 166, 1 ]
,
[ 235, 100, 1 ]
,
[ 235, 194, 1 ]
,
[ 239, 204, 1 ]
,
[ 244, 106, 2 ]
,
$ \\ 
 $
[ 245, 0, 7 ]
,
[ 251, 198, 1 ]
,
[ 252, 30, 6 ]
,
[ 253, 173, 1 ]
,
[ 253, 217, 1 ]
,
[ 263, 113, 1 ]
,
[ 271, 227, 1 ]
,
[ 275, 10, 5 ]
,
$ \\ 
 $
[ 277, 16, 1 ]
,
[ 283, 183, 1 ]
,
[ 289, 207, 1 ]
,
[ 292, 100, 2 ]
,
[ 301, 229, 1 ]
,
[ 301, 243, 1 ]
,
[ 305, 114, 1 ]
,
[ 305, 129, 1 ]
,
$ \\ 
 $
[ 311, 17, 1 ]
,
[ 313, 273, 1 ]
,
[ 347, 18, 1 ]
,
[ 349, 110, 1 ]
,
[ 353, 131, 1 ]
,
[ 359, 140, 1 ]
,
[ 367, 303, 1 ]
,
[ 389, 168, 1 ]
,
$ \\ 
 $
$ \end{tabular} & up to  3  \\ 
\hline \begin{tabular}{c} $ 
[ 101, 26, 1 ]
,
[ 115, 10, 1 ]
,
[ 115, 35, 1 ]
,
[ 119, 47, 1 ]
,
[ 119, 54, 1 ]
,
[ 121, 24, 1 ]
,
[ 131, 105, 1 ]
,
[ 137, 11, 1 ]
,
$ \\ 
 $
[ 139, 56, 1 ]
,
[ 149, 58, 1 ]
,
[ 157, 115, 1 ]
,
[ 163, 75, 1 ]
,
[ 68, 26, 2 ]
,
[ 77, 19, 1 ]
,
[ 77, 68, 1 ]
,
[ 83, 37, 1 ]
,
$ \\ 
 $
[ 85, 20, 1 ]
,
[ 85, 30, 1 ]
,
[ 900, 0, 30 ]
,
[ 92, 20, 2 ]
,
[ 99, 24, 3 ]
,
$ \end{tabular} & up to  4  \\ 
\hline \begin{tabular}{c} $ 
[ 100, 0, 10 ]
,
[ 180, 0, 6 ]
,
[ 35, 15, 1 ]
,
[ 35, 29, 1 ]
,
[ 43, 14, 1 ]
,
[ 44, 16, 2 ]
,
[ 47, 40, 1 ]
,
[ 49, 15, 1 ]
,
$ \\ 
 $
[ 55, 19, 1 ]
,
[ 55, 24, 1 ]
,
[ 61, 53, 1 ]
,
[ 63, 15, 3 ]
,
[ 73, 22, 1 ]
,
$ \end{tabular} & up to  5  \\ 
\hline \begin{tabular}{c} $ 
[ 20, 0, 2 ]
,
[ 25, 20, 1 ]
,
$ \end{tabular} & up to  6  \\ 
\hline \begin{tabular}{c} $ 
[ 23, 10, 1 ]
,
[ 529, 0, 23 ]
,
$ \end{tabular} & up to  8  \\ 
\hline \begin{tabular}{c} $ 
[ 196, 0, 14 ]
,
[ 289, 0, 17 ]
,
$ \end{tabular} & up to  10  \\ 
\hline \begin{tabular}{c} $ 
[ 17, 13, 1 ]
,
[ 225, 0, 15 ]
,
[ 28, 10, 2 ]
,
[ 49, 0, 7 ]
,
[ 7, 1, 1 ]
,
$ \end{tabular} & up to  11  \\ 
\hline \begin{tabular}{c} $ 
[ 25, 0, 5 ]
,
[ 45, 0, 3 ]
,
[ 5, 0, 1 ]
,
$ \end{tabular} & up to  12  \\ 
\hline \begin{tabular}{c} $ 
[ 169, 0, 13 ]
,
$ \end{tabular} & up to  13  \\ 
\hline \begin{tabular}{c} $ 
[ 121, 0, 11 ]
,
$ \end{tabular} & up to  15  \\ 
\hline \begin{tabular}{c} $ 
[ 11, 2, 1 ]
,
$ \end{tabular} & up to  16  \\ 
\hline \begin{tabular}{c} $ 
[ 36, 0, 6 ]
,
$ \end{tabular} & up to  21  \\ 
\hline \begin{tabular}{c} $ 
[ 4, 0, 2 ]
,
[ 9, 0, 3 ].
$ \end{tabular} & up to  22  \\ 
\hline \end{tabular} \normalsize
\end{center}

Out of these, we get the following genuine spaces:
\begin{center}
\footnotesize \begin{tabular}{|c|c|c|} \hline {Weight} & $d$ & Level HNFs at discriminant  -19  with 
genuine space of dim. $d$ \\
\hline  2  &  1  & 
73  levels of norm up to  1145
 \\ 
\hline  2  &  2  & 
31  levels of norm up to  1099
 \\ 
\hline  2  &  3  & 
16  levels of norm up to  935
 \\ 
\hline  2  &  4  & 
17  levels of norm up to  1081
 \\ 
\hline  2  &  5  & 
6  levels of norm up to  932
 \\ 
\hline  2  &  6  & 
6  levels of norm up to  940
 \\ 
\hline  2  &  7  & 
2  levels of norm up to  955
 \\ 
\hline  3  &  2  & 
\begin{tabular}{c} $ 
[ 289, 207, 1 ]
,
[ 289, 81, 1 ]
,
[ 36, 0, 6 ]
,
[ 49, 15, 1 ]
,
[ 49, 33, 1 ]
,
$ \end{tabular}  \\ 
\hline  4  &  2  & 
\begin{tabular}{c} $ 
[ 36, 0, 6 ]
.
$ \end{tabular}  \\ 
\hline \end{tabular} \normalsize
\end{center}
That is, the genuine spaces at level $(6) = [36, 0, 6]$ already observed in Section~\ref{sec:Square-free levels},
as well as two further two-dimensional weight $3$ spaces and their Galois conjugates.


\begin{center}
\footnotesize \begin{tabular}{|c|c|} \hline {\bf Range at discriminant -43.} Level HNFs, up to Galois conjugacy & weights \\
\hline 
133  levels of norm up to  787
 and their Galois conjugates & only 2 \\ 

\hline \begin{tabular}{c} $ 
[ 52, 22, 2 ]
,
[ 59, 31, 1 ]
,
[ 67, 7, 1 ]
,
[ 79, 42, 1 ]
,
[ 68, 28, 2 ]
,
[ 83, 74, 1 ]
,
[ 103, 69, 1 ]
,
[ 109, 71, 1 ]
,
$ \\ 
 $
[ 97, 64, 1 ]
,
[ 107, 49, 1 ]
,
[ 121, 110, 1 ]
,
[ 100, 0, 10 ]
,
[ 101, 9, 1 ]
,
[ 117, 33, 3 ]
,
[ 92, 38, 2 ]
,
[ 127, 40, 1 ]
,
$ \\ 
 $
[ 99, 30, 3 ]
,
[ 121, 0, 11 ]
,
[ 139, 48, 1 ]
,
[ 167, 154, 1 ]
,
$ \end{tabular} & up to  3  \\ 
\hline \begin{tabular}{c} $ 
[ 53, 46, 1 ]
,
[ 31, 4, 1 ]
,
[ 41, 5, 1 ]
,
[ 49, 0, 7 ]
,
[ 44, 0, 2 ]
,
[ 47, 22, 1 ]
,
$ \end{tabular} & up to  4  \\ 
\hline \begin{tabular}{c} $ 
[ 17, 2, 1 ]
,
[ 23, 3, 1 ]
,
[ 25, 0, 5 ]
,
$ \end{tabular} & up to  5  \\ 
\hline \begin{tabular}{c} $ 
[ 13, 11, 1 ]
,
[ 11, 0, 1 ]
,
$ \end{tabular} & up to  6  \\ 
\hline \begin{tabular}{c} $ 
[ 9, 0, 3 ]
,
$ \end{tabular} & up to  7  \\ 
\hline \begin{tabular}{c} $ 
[ 4, 0, 2 ]
.
$ \end{tabular} & up to  8  \\ 
\hline \end{tabular} \normalsize
\end{center}
Out of these, we get the following genuine spaces:
\begin{center}
\footnotesize \begin{tabular}{|c|c|c|} \hline
{Weight} & $d$ & Level HNFs at discriminant  -43  with 
genuine space of dim. $d$ \\
\hline  2  &  1  & 
32  levels of norm up to  737
 \\ 
\hline  2  &  2  & 
18  levels of norm up to  713
 \\ 
\hline  2  &  3  & 
4  levels of norm up to  719
 \\ 
\hline  2  &  4  & 
7  levels of norm up to  572
 \\ 
\hline  6  &  2  & 
\begin{tabular}{c} $ 
[ 9, 0, 3 ]
.
$ \end{tabular}  \\ 
\hline \end{tabular} \normalsize
\end{center}


\begin{center}
\footnotesize \begin{tabular}{|c|c|} \hline {\bf Range at discriminant -67}. Level HNFs, up to Galois conjugacy & weights \\
\hline 
54  levels of norm up to  361
 and their Galois conjugates & only 2 \\ 

\hline \begin{tabular}{c} $ 
[ 29, 3, 1 ]
,
[ 37, 4, 1 ]
,
[ 36, 0, 6 ]
,
[ 49, 0, 7 ]
,
[ 47, 41, 1 ]
,
[ 59, 6, 1 ]
,
[ 71, 36, 1 ]
,
$ \end{tabular} & up to  3  \\ 
\hline \begin{tabular}{c} $ 
[ 17, 0, 1 ]
,
[ 19, 1, 1 ]
,
[ 23, 2, 1 ]
,
[ 25, 0, 5 ]
,
$ \end{tabular} & up to  4  \\ 
\hline \begin{tabular}{c} $ 
[ 9, 0, 3 ]
,
$ \end{tabular} & up to  5  \\ 
\hline \begin{tabular}{c} $ 
[ 4, 0, 2 ]
.
$ \end{tabular} & up to  6  \\ 
\hline \end{tabular} \normalsize
\end{center}
Out of these, we get the following genuine spaces:
\begin{center}
\footnotesize \begin{tabular}{|c|c|c|} \hline 
{Weight} & $d$ & Level HNFs at discriminant  -67  with 
genuine space of dim. $d$ \\
\hline  2  &  1  & 
6  levels of norm up to  323
 \\ 
\hline  2  &  2  & 
3  levels of norm up to  289
 \\ 
\hline  2  &  3  & 
6  levels of norm up to  289
 \\ 
\hline  2  &  4  & 
one level of norm 121
 \\ 
\hline  2  &  8  & 
one level of norm  196
 \\ 
\hline  3  &  2  & 
\begin{tabular}{c} $ 
[ 36, 0, 6 ]
.
$ \end{tabular}  \\ 
\hline \end{tabular} \normalsize
\end{center}


\begin{center}
\footnotesize \begin{tabular}{|c|c|} \hline {\bf Range at discriminant  -163.} 
Level HNFs, up to Galois conjugacy & {weights} \\
\hline \begin{tabular}{c} $ 
[ 25, 0, 5 ]
,
[ 36, 0, 6 ]
,
[ 41, 40, 1 ]
,
[ 43, 41, 1 ]
,
[ 49, 0, 7 ]
,
[ 47, 2, 1 ]
,
[ 53, 49, 1 ]
,
[ 61, 4, 1 ]
,
$ \\ 
 $
[ 71, 5, 1 ]
,
[ 83, 6, 1 ]
,
[ 97, 7, 1 ]
,
[ 121, 0, 11 ]
,
[ 113, 8, 1 ]
,
$ \end{tabular} & 2 \\ 
\hline \begin{tabular}{c} $ 
[ 9, 0, 3 ]
,
$ \end{tabular} & up to  3  \\ 
\hline \begin{tabular}{c} $ 
[ 4, 0, 2 ]
.
$ \end{tabular} & up to  4  \\ 
\hline \end{tabular} \normalsize
\end{center}
Out of these, we get the following genuine spaces:
\begin{center}
 \footnotesize \begin{tabular}{|c|c|c|} \hline 
{Weight} & $d$ & Level HNFs at discriminant  -163  with 
genuine space of dim. $d$ \\
\hline  2  &  2  & 
3  levels of norm up to  47
 \\ 
\hline  2  &  4  & 
[49, 0, 7]
 \\ 
\hline \end{tabular} \normalsize
\end{center}


\begin{center}
 \scriptsize \begin{tabular}{|c|c|} \hline {\bf Range at discriminant  -7.} Level HNFs, up to Galois conjugacy & {weights} \\
\hline 
460  levels of norm up to  2767
 and their Galois conjugates & only 2 \\ 
\hline \begin{tabular}{c} $ 
[ 214, 155, 1 ]
,
[ 218, 188, 1 ]
,
[ 218, 79, 1 ]
,
[ 214, 165, 1 ]
,
[ 212, 28, 2 ]
,
[ 172, 36, 2 ]
,
[ 253, 59, 1 ]
,
[ 198, 18, 3 ]
,
$ \\ 
 $
[ 253, 147, 1 ]
,
[ 254, 104, 1 ]
,
[ 254, 231, 1 ]
,
[ 198, 12, 3 ]
,
[ 226, 42, 1 ]
,
[ 226, 70, 1 ]
,
[ 242, 0, 11 ]
,
[ 275, 20, 5 ]
,
$ \\ 
 $
[ 298, 34, 1 ]
,
[ 261, 21, 3 ]
,
[ 274, 16, 1 ]
,
[ 274, 153, 1 ]
,
[ 277, 253, 1 ]
,
[ 281, 33, 1 ]
,
[ 268, 22, 2 ]
,
[ 289, 0, 17 ]
,
$ \\ 
 $
[ 298, 183, 1 ]
,
[ 284, 78, 2 ]
,
[ 326, 25, 1 ]
,
[ 394, 341, 1 ]
,
[ 347, 272, 1 ]
,
[ 317, 233, 1 ]
,
[ 319, 94, 1 ]
,
[ 319, 268, 1 ]
,
$ \\ 
 $
[ 302, 220, 1 ]
,
[ 302, 232, 1 ]
,
[ 326, 137, 1 ]
,
[ 359, 128, 1 ]
,
[ 361, 0, 19 ]
,
[ 373, 154, 1 ]
,
[ 379, 27, 1 ]
,
[ 148, 16, 2 ]
,
$ \\ 
 $
[ 358, 125, 1 ]
,
[ 358, 304, 1 ]
,
[ 401, 248, 1 ]
,
[ 331, 174, 1 ]
,
[ 382, 19, 1 ]
,
[ 407, 378, 1 ]
,
[ 389, 296, 1 ]
,
[ 407, 193, 1 ]
,
$ \\ 
 $
[ 387, 72, 3 ]
,
[ 382, 210, 1 ]
,
[ 333, 24, 3 ]
,
[ 431, 389, 1 ]
,
[ 386, 73, 1 ]
,
[ 386, 119, 1 ]
,
[ 394, 144, 1 ]
,
[ 422, 401, 1 ]
,
$ \\ 
 $
[ 337, 212, 1 ]
,
[ 421, 244, 1 ]
,
[ 338, 0, 13 ]
,
[ 422, 190, 1 ]
,
[ 457, 85, 1 ]
,
[ 443, 285, 1 ]
,
[ 449, 196, 1 ]
,
[ 487, 103, 1 ]
,
$ \\ 
 $
[ 541, 46, 1 ]
,
[ 547, 459, 1 ]
,
[ 463, 80, 1 ]
,
[ 473, 61, 1 ]
,
[ 473, 147, 1 ]
,
[ 477, 114, 3 ]
,
[ 529, 0, 23 ]
,
[ 529, 32, 1 ]
,
$ \\ 
 $
[ 575, 45, 5 ]
,
[ 571, 158, 1 ]
,
[ 491, 234, 1 ]
,
[ 569, 178, 1 ]
,
[ 499, 151, 1 ]
,
[ 557, 133, 1 ]
,
[ 613, 563, 1 ]
,
[ 599, 129, 1 ]
,
$ \\ 
 $
$ \end{tabular} & up to  3  \\ 
\hline \begin{tabular}{c} $ 
[ 86, 18, 1 ]
,
[ 106, 38, 1 ]
,
[ 106, 14, 1 ]
,
[ 116, 14, 2 ]
,
[ 86, 61, 1 ]
,
[ 92, 26, 2 ]
,
[ 100, 0, 10 ]
,
[ 211, 190, 1 ]
,
$ \\ 
 $
[ 207, 27, 3 ]
,
[ 163, 25, 1 ]
,
[ 197, 144, 1 ]
,
[ 191, 19, 1 ]
,
[ 193, 119, 1 ]
,
[ 239, 72, 1 ]
,
[ 169, 0, 13 ]
,
[ 179, 53, 1 ]
,
$ \\ 
 $
[ 225, 0, 15 ]
,
[ 233, 30, 1 ]
,
[ 263, 123, 1 ]
,
[ 121, 105, 1 ]
,
[ 121, 0, 11 ]
,
[ 134, 78, 1 ]
,
[ 134, 122, 1 ]
,
[ 127, 104, 1 ]
,
$ \\ 
 $
[ 137, 16, 1 ]
,
[ 142, 39, 1 ]
,
[ 142, 31, 1 ]
,
[ 151, 81, 1 ]
,
[ 149, 114, 1 ]
,
[ 158, 91, 1 ]
,
[ 158, 145, 1 ]
,
$ \end{tabular} & up to  4  \\ 
\hline \begin{tabular}{c} $ 
[ 36, 0, 6 ]
,
[ 67, 55, 1 ]
,
[ 71, 31, 1 ]
,
[ 74, 45, 1 ]
,
[ 74, 65, 1 ]
,
[ 79, 66, 1 ]
,
[ 46, 32, 1 ]
,
[ 46, 36, 1 ]
,
$ \\ 
 $
[ 44, 8, 2 ]
,
[ 50, 5, 5 ]
,
[ 58, 36, 1 ]
,
[ 58, 7, 1 ]
,
[ 107, 48, 1 ]
,
[ 109, 29, 1 ]
,
[ 99, 12, 3 ]
,
[ 113, 70, 1 ]
,
$ \\ 
 $
$ \end{tabular} & up to  5  \\ 
\hline \begin{tabular}{c} $ 
[ 43, 18, 1 ]
,
[ 53, 14, 1 ]
,
$ \end{tabular} & up to  6  \\ 
\hline \begin{tabular}{c} $ 
[ 29, 7, 1 ]
,
[ 22, 17, 1 ]
,
[ 22, 15, 1 ]
,
[ 18, 3, 3 ]
,
[ 37, 28, 1 ]
,
$ \end{tabular} & up to  7  \\ 
\hline \begin{tabular}{c} $ 
[ 25, 0, 5 ]
,
[ 23, 13, 1 ]
,
$ \end{tabular} & up to  8  \\ 
\hline \begin{tabular}{c} $ 
[ 11, 4, 1 ]
,
$ \end{tabular} & up to  11  \\ 
\hline \begin{tabular}{c} $ 
[ 4, 0, 2 ]
,
$ \end{tabular} & up to  12  \\ 
\hline \begin{tabular}{c} $ 
[ 4, 2, 1 ]
,
$ \end{tabular} & up to  15  \\ 
\hline \begin{tabular}{c} $ 
[ 2, 0, 1 ]
,
$ \end{tabular} & up to  19  \\ 
\hline \begin{tabular}{c} $ 
[ 9, 0, 3 ]
.
$ \end{tabular} & up to  20  \\ 
\hline \end{tabular} \normalsize
\end{center}
Out of these, we get the following genuine spaces:
\begin{center}
\footnotesize \begin{tabular}{|c|c|c|} \hline {Weight} & $d$ & Level HNFs at discriminant  -7  with genuine space of dim. $d$ \\
\hline  2  &  1  &
200  levels of norm up to  2657
 \\
\hline  2  &  2  &
100  levels of norm up to  1913
 \\
\hline  2  &  3  &
30  levels of norm up to  1814
 \\
\hline  2  &  4  &
15  levels of norm up to  2563
 \\
\hline  2  &  5  &
6  levels of norm up to  1439
 \\
\hline  2  &  6  &
4  levels of norm up to  1702
 \\
\hline  3  &  2  &
\begin{tabular}{c} $
[ 225, 0, 15 ]
,
$ \end{tabular}  \\
\hline  4  &  1  &
\begin{tabular}{c} $
[ 11, 6, 1 ]
,
[ 11, 4, 1 ]
,
[ 22, 17, 1 ]
,
[ 22, 4, 1 ]
,
[ 46, 36, 1 ]
,
[ 46, 9, 1 ]
,
[ 92, 26, 2 ]
,
$ \\
 $
[ 92, 18, 2 ]
,
[ 121, 105, 1 ]
,
[ 121, 15, 1 ]
,
[ 116, 14, 2 ]
,
[ 116, 42, 2 ]
,
$ \end{tabular}  \\
\hline  4  &  2  &
\begin{tabular}{c} $
[ 22, 15, 1 ]
,
[ 22, 6, 1 ]
,
[ 58, 50, 1 ]
,
[ 58, 7, 1 ]
.
$ \end{tabular}  \\
\hline \end{tabular} \normalsize
\end{center}


\begin{center}
\footnotesize \begin{tabular}{|c|c|} \hline {\bf Range at discriminant  -11.} 
Level HNFs, up to Galois conjugacy & {weights} \\
\hline 
507  levels of norm up to  2803
 and their Galois conjugates & only 2 \\ 

\hline \begin{tabular}{c} $ 
[ 180, 18, 6 ]
,
[ 207, 12, 3 ]
,
[ 213, 14, 1 ]
,
[ 213, 156, 1 ]
,
[ 225, 0, 15 ]
,
[ 235, 161, 1 ]
,
[ 235, 208, 1 ]
,
[ 236, 102, 2 ]
,
$ \\ 
 $
[ 245, 21, 7 ]
,
[ 265, 118, 1 ]
,
[ 265, 171, 1 ]
,
[ 267, 125, 1 ]
,
[ 267, 230, 1 ]
,
[ 268, 48, 2 ]
,
[ 276, 100, 2 ]
,
[ 276, 54, 2 ]
,
$ \\ 
 $
[ 279, 27, 3 ]
,
[ 284, 112, 2 ]
,
[ 289, 0, 17 ]
,
[ 291, 126, 1 ]
,
[ 291, 261, 1 ]
,
[ 295, 228, 1 ]
,
[ 295, 243, 1 ]
,
[ 309, 120, 1 ]
,
$ \\ 
 $
[ 309, 17, 1 ]
,
[ 311, 280, 1 ]
,
[ 313, 244, 1 ]
,
[ 317, 224, 1 ]
,
[ 331, 104, 1 ]
,
[ 333, 39, 3 ]
,
[ 335, 158, 1 ]
,
[ 335, 243, 1 ]
,
$ \\ 
 $
[ 339, 102, 1 ]
,
[ 339, 123, 1 ]
,
[ 345, 156, 1 ]
,
[ 345, 18, 1 ]
,
[ 345, 248, 1 ]
,
[ 345, 303, 1 ]
,
[ 353, 320, 1 ]
,
[ 355, 156, 1 ]
,
$ \\ 
 $
[ 355, 298, 1 ]
,
[ 356, 104, 2 ]
,
[ 361, 0, 19 ]
,
[ 367, 128, 1 ]
,
[ 372, 142, 2 ]
,
[ 372, 166, 2 ]
,
[ 379, 335, 1 ]
,
[ 383, 19, 1 ]
,
$ \\ 
 $
[ 388, 134, 2 ]
,
[ 389, 177, 1 ]
,
[ 397, 165, 1 ]
,
[ 401, 148, 1 ]
,
[ 411, 195, 1 ]
,
[ 411, 332, 1 ]
,
[ 412, 170, 2 ]
,
[ 419, 124, 1 ]
,
$ \\ 
 $
[ 421, 35, 1 ]
,
[ 433, 386, 1 ]
,
[ 443, 361, 1 ]
,
[ 445, 141, 1 ]
,
[ 445, 36, 1 ]
,
[ 449, 183, 1 ]
,
[ 452, 20, 2 ]
,
[ 463, 207, 1 ]
,
$ \\ 
 $
[ 467, 179, 1 ]
,
[ 471, 422, 1 ]
,
[ 485, 126, 1 ]
,
[ 485, 223, 1 ]
,
[ 487, 169, 1 ]
,
[ 489, 134, 1 ]
,
[ 489, 191, 1 ]
,
[ 499, 412, 1 ]
,
$ \\ 
 $
[ 509, 22, 1 ]
,
[ 515, 188, 1 ]
,
[ 515, 223, 1 ]
,
[ 521, 39, 1 ]
,
[ 529, 0, 23 ]
,
[ 529, 119, 1 ]
,
[ 577, 184, 1 ]
,
[ 587, 281, 1 ]
,
$ \\ 
 $
[ 599, 181, 1 ]
,
[ 619, 536, 1 ]
,
[ 631, 168, 1 ]
,
[ 643, 283, 1 ]
,
$ \end{tabular} & up to  3  \\ 
\hline \begin{tabular}{c} $ 
[ 100, 0, 10 ]
,
[ 111, 23, 1 ]
,
[ 111, 50, 1 ]
,
[ 115, 18, 1 ]
,
[ 115, 41, 1 ]
,
[ 124, 18, 2 ]
,
[ 137, 58, 1 ]
,
[ 141, 114, 1 ]
,
$ \\ 
 $
[ 141, 120, 1 ]
,
[ 147, 0, 7 ]
,
[ 148, 26, 2 ]
,
[ 155, 133, 1 ]
,
[ 155, 71, 1 ]
,
[ 157, 108, 1 ]
,
[ 159, 12, 1 ]
,
[ 159, 65, 1 ]
,
$ \\ 
 $
[ 163, 134, 1 ]
,
[ 177, 110, 1 ]
,
[ 177, 125, 1 ]
,
[ 179, 109, 1 ]
,
[ 181, 116, 1 ]
,
[ 185, 13, 1 ]
,
[ 185, 161, 1 ]
,
[ 188, 40, 2 ]
,
$ \\ 
 $
[ 191, 137, 1 ]
,
[ 196, 0, 14 ]
,
[ 199, 167, 1 ]
,
[ 201, 158, 1 ]
,
[ 201, 176, 1 ]
,
[ 212, 24, 2 ]
,
[ 223, 173, 1 ]
,
[ 229, 101, 1 ]
,
$ \\ 
 $
[ 251, 203, 1 ]
,
[ 257, 159, 1 ]
,
[ 269, 205, 1 ]
,
[ 75, 0, 5 ]
,
[ 93, 21, 1 ]
,
[ 93, 83, 1 ]
,
$ \end{tabular} & up to  4  \\ 
\hline \begin{tabular}{c} $ 
[ 15, 8, 1 ]
,
[ 169, 0, 13 ]
,
[ 507, 0, 13 ]
,
[ 1521, 0, 39 ]
,
[ 103, 17, 1 ]
,
[ 113, 10, 1 ]
,
[ 36, 0, 6 ]
,
[ 45, 3, 3 ]
,
$ \\ 
 $
[ 60, 12, 2 ]
,
[ 60, 22, 2 ]
,
[ 67, 24, 1 ]
,
[ 69, 18, 1 ]
,
[ 69, 27, 1 ]
,
[ 71, 14, 1 ]
,
[ 89, 36, 1 ]
,
[ 92, 36, 2 ]
,
$ \\ 
 $
[ 97, 29, 1 ]
,
$ \end{tabular} & up to  5  \\ 
\hline \begin{tabular}{c} $ 
[ 47, 20, 1 ]
,
[ 49, 0, 7 ]
,
[ 53, 12, 1 ]
,
[ 59, 51, 1 ]
,
$ \end{tabular} & up to  6  \\ 
\hline \begin{tabular}{c} $ 
[ 2209, 0, 47 ]
,
[ 20, 2, 2 ]
,
[ 31, 21, 1 ]
,
[ 37, 13, 1 ]
,
$ \end{tabular} & up to  7  \\ 
\hline \begin{tabular}{c} $ 
[ 15, 11, 1 ]
,
[ 23, 18, 1 ]
,
$ \end{tabular} & up to  8  \\ 
\hline \begin{tabular}{c} $ 
[ 12, 0, 2 ]
,
$ \end{tabular} & up to  9  \\ 
\hline \begin{tabular}{c} $ 
[ 25, 0, 5 ]
,
$ \end{tabular} & up to  10  \\ 
\hline \begin{tabular}{c} $ 
[ 9, 2, 1 ]
,
$ \end{tabular} & up to  11  \\ 
\hline \begin{tabular}{c} $ 
[ 9, 0, 3 ]
,
$ \end{tabular} & up to  12  \\ 
\hline \begin{tabular}{c} $ 
[ 4, 0, 2 ]
,
$ \end{tabular} & up to  15  \\ 
\hline \begin{tabular}{c} $ 
[ 3, 0, 1 ]
,
$ \end{tabular} & up to  17  \\ 
\hline \begin{tabular}{c} $ 
[ 25, 16, 1 ]
,
[ 5, 1, 1 ]
.
$ \end{tabular} & up to  20  \\ 
\hline \end{tabular} \normalsize
\end{center}
Out of these, we get the following genuine spaces:
\begin{center}
\footnotesize \begin{tabular}{|c|c|c|} \hline 
{Weight} & $d$ & Level HNFs at discriminant  -11  with 
genuine space of dim. $d$ \\
\hline  2  &  1  & 
178  levels of norm up to  2491
 \\ 
\hline  2  &  2  & 
107  levels of norm up to  2621
 \\ 
\hline  2  &  3  & 
32  levels of norm up to  2689
 \\ 
\hline  2  &  4  & 
18  levels of norm up to  2201
 \\ 
\hline  2  &  5  & 
4  levels of norm up to  1335
 \\ 
\hline  2  &  6  & 
6  levels of norm up to  2597
 \\ 
\hline  2  &  7  & 
4  levels of norm up to  1035
 \\ 
\hline  2  &  12  & 
4  levels of norm up to  2209
 \\ 
\hline  4  &  1  & 
\begin{tabular}{c} $ 
[ 15, 6, 1 ]
,
[ 15, 8, 1 ]
,
[ 185, 161, 1 ]
,
[ 185, 23, 1 ]
,
[ 20, 2, 2 ]
,
[ 20, 6, 2 ]
,
[ 45, 3, 3 ]
,
$ \\ 
 $
[ 45, 9, 3 ]
,
$ \end{tabular}  \\ 
\hline  4  &  2  & 
\begin{tabular}{c} $ 
[ 100, 0, 10 ]
,
[ 92, 36, 2 ]
,
[ 92, 8, 2 ]
,
$ \end{tabular}  \\ 
\hline  4  &  5  & 
\begin{tabular}{c} $ 
[ 60, 12, 2 ]
,
[ 60, 16, 2 ]
,
$ \end{tabular}  \\ 
\hline  6  &  2  & 
\begin{tabular}{c} $ 
[ 25, 0, 5 ]
.
$ \end{tabular}  \\ 
\hline \end{tabular} \normalsize
\end{center}

\bibliographystyle{alpha}

\begin{thebibliography}{}

\end{thebibliography}


\begin{thebibliography}{BDP{\c{S}}15}

\bibitem[BDP{\c{S}}15]{BergerDembelePacettiSengun}
Tobias Berger, Lassina Demb{\'e}l{\'e}, Ariel Pacetti, and Mehmet~Haluk
  {\c{S}}eng{\"u}n.
\newblock \textit{Theta lifts of {B}ianchi modular forms and applications to
  paramodularity.}
\newblock {J. Lond. Math. Soc. (2)}, \textbf{92}(2):353--370, 2015.


\bibitem[B{\c{S}}V16]{BergeronSengunVenkatesh}
Nicolas Bergeron, Mehmet~Haluk {\c{S}}eng{\"u}n and Akshay Venkatesh. 
\textit{Torsion homology growth and cycle complexity of arithmetic manifolds.}
 Duke Math. J.  \textbf{165}  (2016),  no.~9, 1629--1693. MR3513571.
		

\bibitem[BCP93]{magma}
 Wieb Bosma, John Cannon and Catherine Playoust.
\textit{The Magma algebra system. I. The user language.} 
Computational algebra and number theory (London, 1993).
J. Symbolic Comput. \textbf{24} (1997), no. 3--4, 235--265.


\bibitem[CO77]{CO}
Henri Cohen and Joseph Oesterl{\'e}.
\newblock \textit{Dimensions des espaces de formes modulaires.}
\newblock In {Modular functions of one variable, {VI} ({P}roc. {S}econd
  {I}nternat. {C}onf., {U}niv. {B}onn, {B}onn, 1976)}, pages 69--78. Lecture
  Notes in Math., Vol. 627. Springer, Berlin, 1977.

\bibitem[FGT10]{FGT}
Tobias Finis, Fritz Grunewald, and Paolo Tirao.
\newblock \textit{The cohomology of lattices in $\textrm{SL}(2,\mathbb{C})$.}
\newblock {Experiment. Math.}, \textbf{19} (2010),  no.~1, 29--63. MR2649984

\bibitem[Lan80]{Langlands80}
Robert P. Langlands.
\textit{Base Change for GL(2)}. Annals of Math. Studies 96. Princeton: 
Princeton University Press, 1980.

\bibitem[Rah13]{Rahm13}
Alexander~D. Rahm. \textit{Higher torsion in the Abelianization of the full Bianchi groups.}
 LMS J. Comput. Math.  \textbf{16}  (2013), 344--365. MR3109616. 
		
\bibitem[Rah11]{Rahm11}
Alexander~D. Rahm.  \textit{Homology and K-theory of the Bianchi groups.}
 C. R. Math. Acad. Sci. Paris  \textbf{349}  (2011),  no.~11--12, 615--619. MR2817377.
		

\bibitem[R{\c{S}}13]{RahmSengun}
Alexander~D. Rahm and Mehmet~Haluk {\c{S}}eng{\"u}n.
\newblock \textit{On level one cuspidal Bianchi modular forms}.
\newblock LMS J. Comput. Math.  \textbf{16}  (2013), 187--199. MR3091734.

\bibitem[R{\c{S}T}16]{database}
Alexander~D. Rahm, Mehmet~Haluk {\c{S}}eng{\"u}n and Panagiotis~Tsaknias.
\newblock \textit{Dimension tables for spaces of Bianchi modular forms}, 2016.
\url{http://math.uni.lu/~rahm/dimensionTables/}


\bibitem[Ran10]{Ranum1910}
Arthur Ranum.
\newblock {\em The group of classes of congruent quadratic integers with respect to
  a composite ideal modulus}.
\newblock {Trans. Amer. Math. Soc} \textbf{11}  (1910),  no.~2, 172--198. MR1500859.


\bibitem[Tsa14]{Tsaknias2012a}
Panagiotis~Tsaknias.
\newblock \textit{A possible generalization of Maeda's conjecture}.
In  B{\"o}ckle, Wiese (editors): Computations with modular forms, 
 317--329, Contrib. Math. Comput. Sci., \textbf{6}, Springer, Cham,  2014. MR3381458.

\end{thebibliography}

\end{document}